\documentclass{amsart}

\usepackage{amsmath}
\usepackage{amsthm}
\usepackage{amssymb}
\usepackage{mathrsfs}
\usepackage{xcolor}
\usepackage{hyperref}
\hypersetup{colorlinks=true,linktoc=section,urlcolor={blue!80!black},citecolor={green!60!black}, breaklinks=true}%%%%

\newcommand{\C}{\mathbb{C}}
\newcommand{\N}{\mathbb{N}}
\newcommand{\R}{\mathbb{R}}
\newcommand{\Z}{\mathbb{Z}}

\newcommand{\cA}{\mathcal{A}}
\newcommand{\cD}{\mathcal{D}}
\newcommand{\cF}{\mathcal{F}}

\newcommand{\e}{\varepsilon}

\newcommand{\wt}{\widetilde}

\newtheorem{thm}{Theorem}[section]

\newtheorem{lem}[thm]{Lemma}

\theoremstyle{definition}

\theoremstyle{remark}

\newtheorem{rem}[thm]{Remark}	

\begin{document}

\title[ Poisson Matrix $\mathbf{A}_2$ characteristic ]{The Poisson Matrix $\mathbf{A}_2$ characteristic and the 3/2 blow up of the Hilbert transform}

\author[Domelevo]{Komla Domelevo}
\address{Institut f{\"u}r Mathematik, Julius-Maximilians-Universit{\"a}t
W{\"u}rzburg, Emil-Fischer-Str.~41, 97074 W{\"u}rzburg, Germany}
\email{kolma.domelevo@uni-wuerzburg.de}

\author[Kakaroumpas]{Spyridon Kakaroumpas}
\address{Institut f{\"u}r Mathematik, Julius-Maximilians-Universit{\"a}t
W{\"u}rzburg, Emil-Fischer-Str.~41, 97074 W{\"u}rzburg, Germany}
\email{spyridon.kakaroumpas@uni-wuerzburg.de}

\author[Petermichl]{Stefanie Petermichl}
\address{Institut f{\"u}r Mathematik, Julius-Maximilians-Universit{\"a}t
W{\"u}rzburg, Emil-Fischer-Str.~41, 97074 W{\"u}rzburg, Germany}
\email{stefanie.petermichl@uni-wuerzburg.de}

\author[Treil]{Sergei Treil}
\address{Department of Mathematics, Brown University, Box 1917, 151 Thayer Street Providence, RI 02912, USA}
\email{treil@math.brown.edu}

\author[Volberg]{Alexander Volberg}
\address{Department of Mathematics, Michigan State University, 619 Red Cedar Road,
C212 Wells Hall, East Lansing, MI 48824, USA}
\email{volberg@msu.edu}

\begin{abstract}
    Recently the matrix $A_2$ conjecture was disproved. % by the first, the third, the fourth and the fifth authors.
    Indeed, the growth of the vector Hilbert transform in the matrix weighted $L^2(W)$ space was shown to be at best a constant multiple of $[W]_{\mathbf{A}_2}^{3/2}$. This bound had  previously been  established and it was thus proved  that it is sharp and the conjectured linear growth cannot be obtained. It is a natural question to see if the $3/2$ power persists if we replace the classical matrix $A_2$ characteristic by the ``fattened'', larger, so-called matrix Poisson $A_2$ characteristic. We show that the 3/2 power, even in this case, cannot be improved.
\end{abstract}

\maketitle

\section{Introduction}

The classical Hunt--Muckenhaupt--Wheeden theorem \cite{HuMuWh} characterized the boundedness of the Hilbert transform
$$\mathcal{H}f(x)=\frac1{\pi}\text{ p.v.}\int \frac{f(t)}{x-t}\,\mathrm{d}t$$
in weighted $L^2(w)$ spaces, where $w$ is an almost everywhere positive, locally integrable function.  The characterization was via the $A_2$ characteristic of the weight
\begin{equation}
    \label{eq:scalar_A2_characteristic}
    [w]_{{A}_2}=\sup_{I}\langle w \rangle_I \langle w^{-1} \rangle_I,    
\end{equation}
where the supremum runs over all intervals $I$ on the real line. The Hilbert transform maps thus boundedly in $L^2(w)$ if and only if the weight $w$ has a finite $A_2$ characteristic $[w]_{{A}_2}$.

This very characterization was a sought after but difficult task in case of a matrix weight $W$, now a locally integrable function with values in the symmetric positive definite matrices, and the vector Hilbert transform. The characterization was accomplished by the fourth and the fifth authors \cite{TrVo} in the late 90s, establishing the matrix $A_2$ characteristic
$$[W]_{\mathbf{A}_2}=\sup_{I}\|\langle W \rangle_I^{1/2} \langle W^{-1} \rangle_I^{1/2}\|^2,$$ where the supremum runs over all intervals on the real line. Again, the Hilbert transform acts boundedly in $L^2(W)$ if and only if the weight $W$ has a finite matrix $A_2$ characteristic $[W]_{\mathbf{A}_2}.$ 

There were numerous applications of such an estimate in probability and operator theory, see a summary in \cite{DoPeTrVo}. Let us also remark that it is known that the operator case fails \cite{GiPoTrVo} due to a blow up of the estimate with the side length of the matrix. Indeed, interest developed both in the exact dimensional growth as well as the growth with the $A_2$ characteristics. Both the scalar and the matrix $A_2$ characteristics have ranges of at least 1 due to Jensen's inequality.

The famous matrix $A_2$ conjecture stated that the vector Hilbert transform had a norm estimate in a matrix weighted $L^2(W)$ space bounded by the linear power of the matrix $A_2$ characteristic of the weight. This had been shown to be the case for scalar weights by the third author in \cite{Pe} using the classical scalar $A_2$ characteristic: 
$$\|\mathcal{H}\|_{L^2(w)\to L^2(w)}\lesssim [w]_{{A}_2}.$$
In the scalar case, the estimate itself required novel ideas at the time, but examples that showed that the growth had to be at least linear were known long before the estimate had been established \cite{Bu}. Indeed, power weights with exponent  approaching 1 from below provides such an example sequence, while taking as a test function its inverse. The sharpness of the exponent 1 is then a simple direct calculation for many operators, including the Hilbert transform: one can find a $A_2$ weight $w_s$ and a nonzero function $f_s\in L^2(w_s)$ such that
$$\|\mathcal{H}f_s\|_{L^2(w_s)}\sim [w_s]_{A_2}\|f_s\|_{L^2(w_s)}.$$

In \cite{NaPeTrVo} the concept of convex body sparse domination was established and resulted in the best to date upper estimate in the matrix case with a power of 3/2:
$$\|\mathcal{H}\|_{L^2(W)\to L^2(W)}\lesssim [W]_{\mathbf{A}_2}^{3/2}.$$
The authors had a number of proofs, but the obstacle of the non-commutativity could not be fully overcome in any of them. Finally the first, third, fourth and fifth authors constructed in \cite{DoPeTrVo} a complicated counterexample sequence that showed that 3/2 could not be improved, showing the matrix $A_2$ conjecture was false: one can find a matrix $A_2$ weight $W_s$ and a nonzero function $f_s\in L^2(W_s)$ such that
\begin{equation}
    \label{eq:sharp_matrix_A2}
    \|\mathcal{H}f_s\|_{L^2(W_s)}\sim [W_s]_{\mathbf{A}_2}^{3/2}\|f_s\|_{L^2(W_s)}. 
\end{equation}
This sequence of examples have an increasing number of singularities, carefully exploiting non-commutativity in a top down approach. 

``Fattened'' $A_2$ characteristics, where one no longer uses classical averages of the weight to compute the $A_2$ characteristic but rather bump averages became a theme of investigation for several reasons. An example of such a characteristic is the Poisson $A_2$ characteristic defined by
\begin{equation}
    \label{eq:scalar_Poisson_characteristic}
    [w]_{{A}_2}^{\text{fat}}=\sup_{x\in\R,~t>0}w(x,t)w^{-1}(x,t),
\end{equation}
where 
$$w(x,t) := \frac{1}{\pi}\int_{\R}\frac{t}{(x-y)^2+t^2}w(y)\,\mathrm{d}y,\quad x\in\R,~t>0.$$
For instance D.~Sarason conjectured \cite[Section 7.9]{HaNi} that the Hilbert transform mapped boundedly in $L^2$ in a two weight setting if and only if the weights had a joint Poisson $A_2$ characteristic. The necessity was shown by the fourth author to be true, the proof is presented with attribution in \cite[Section 7.9]{HaNi}. However, the sufficiency turned out to be false, as established by Nazarov in the unpublished manuscript \cite{Na}. Later in a series of works \cite{LaI, LaII, LaIII, Hy} the two weight problem for the Hilbert transform was solved, naming the joint Poisson $A_2$ condition as only one of several necessary and sufficient conditions for boundedness.

In a slightly different direction, Cruz-Uribe and P\'{e}rez \cite{CrPe} conjectured that one would obtain a sufficient condition for the two weight boundedness of the Hilbert transform and general Calder\'{o}n--Zygmund operators if in~\eqref{eq:scalar_A2_characteristic} one replaced the averages with respect to the $L^1$ norm by averages with respect to ``bigger'' Orlicz space norms. This was inspired by an earlier result of Neugebauer \cite{Ne}, who had shown that upon replacing the $L^1$ averages in~\eqref{eq:scalar_A2_characteristic} by $L^r$ averages for some $1<r<\infty$, one does obtain a sufficient condition. The aforementioned conjecture of Cruz-Uribe and P\'{e}rez \cite{CrPe} was confirmed almost simultaneously by Nazarov, Reznikov, the fourth and the fifth author \cite{NaReTrVo} and Lerner \cite{Le}. Inspired by these results, Cruz-Uribe, Reznikov and the fifth author \cite{CrReVo} asked whether it would be possible to ``separate'' the two Orlicz space averages, that is assuming the simultaneous finiteness of two characteristics in each of which one weight is integrated in the $L^1$ sense and the other in the Orlicz space sense, and still get a sufficient condition. This became known as the ``separated bump conjecture''. Since then several partial results have been achieved regarding this conjecture, see for instance \cite{AnCrMo, CrReVo, LaIV, RaSp, TrVo1}. In particular, the fourth and the fifth author showed in \cite{TrVo1} that this conjecture is true for sparse square or more general $p$-functions. This raised the hope that a proof of the separated bump conjecture could be achieved by dominating Calder\'{o}n--Zygmund through sparse $p$-functions (instead of sparse linear operators). However, this domination was disproved for the Hilbert transform by the second author in \cite{Ka}. Nevertheless, to the best of our knowledge the separated bump conjecture remains open as of the time of writing.

Going back to the case of one weight, it is remarkable that proving the norm estimate with exponent 1 for the Hilbert transform with scalar weight was a much simpler task if one allowed the Poisson characteristic \eqref{eq:scalar_Poisson_characteristic}, as shown by the third author and J. Wittwer \cite{PeWit}. In fact, this estimate predates the one in \cite{Pe}.
This is due to the connection between the Hilbert transform and the Poisson equation via Brownian motion or a Littlewood Paley formula, as well as the conformal invariance of the Poisson $A_2$ characteristic. However, the classical and the Poisson characteristics are not comparable with each other linearly due to the slow decay of the Poisson kernel. There is such a linear relation if one ``bumps'', for instance, the heat kernel \cite{PeVo}, but the Hilbert transform fits with the Poisson kernel, not the heat kernel. Even for simple power weights, the square of the classical characteristic is needed to bound the Poisson characteristic. (For a detailed computation we refer to the appendix of the PhD thesis of the second author \cite{Ka_PhD}.) It was a deep and complex task by the second and the fourth author \cite{KaTr} to establish that even in the presence of the Poisson $A_2$ characteristic, the linear estimate for the Hilbert transform is still optimal: one can find a Poisson $A_2$ weight $w_s$ and a nonzero function $f\in L^2(w_s)$ such that
$$\|Hf_s\|_{L^2(w_s)}\sim [w_s]_{{A}_2}^{\text{fat}}\|f_s\|_{L^2(w_s)}.$$
In this paper we show that the Poisson matrix $A_2$ characteristic still requires a 3/2 exponent in the norm estimate of the Hilbert transform. The Poisson matrix characteristic is defined by
$$ [W]_{\mathbf{A}_2}^{\text{fat}}=\sup_{x\in\R,~t>0}\|W(x,t)^{1/2}W^{-1}(x,t)^{1/2}\|^2$$ and it is shown that one can find a matrix Poisson $A_2$ weight $w_s$ and a nonzero function $f\in L^2(W_s)$ such that
$$\|Hf_s\|_{L^2(W_s)}\sim [W_s]_{\mathbf{A}_2^{\text{fat}}}^{3/2}\|f_s\|_{L^2(W_s)}.$$
Actually, unlike \cite{Na} but similarly to \cite{KaTr} we give an explicit construction of such an example. Inspired by the strategy of \cite{KaTr}, we proceed in several steps:
\begin{itemize}
    \item The starting point is an example featuring~\eqref{eq:sharp_matrix_A2} with the Hilbert transform replaced by a \emph{dyadic model} and the matrix $A_2$ characteristic replaced by the \emph{dyadic} $A_2$ characteristic. Such an example we borrow from \cite{DoPeTrVo}. We call it the ``large step'' example.

    \item Next, we want to obtain a dyadic example as in the first point, but this time featuring \emph{dyadically smooth} weights. This is inspired by \cite{KaTr}, which was itself inspired by \cite{Na}. However, since this time we are dealing with \emph{matrix} valued weights, it is not entirely clear what the condition of dyadic smoothness should look like. All inequalities that one writes involving matrices require great care.

    Even if one agrees on what dyadic smoothness in the matrix setting should be, it is not immediately clear that the ``small step'' transform from \cite{KaTr} can be reasonably applied to the present setting. The ``large step'' example that was used in \cite{KaTr}, essentially one of the classical examples due to \cite{Bu}, had on the level of the martingale representation of the involved functions a very simple structure. However, the ``large step'' example we use here from \cite{DoPeTrVo} is much more complicated. As a result, our analog of the ``small step'' transform is considerably more involved than in \cite{KaTr}. Our methods lie in the intersection of convex analysis, algebraic topology and probability theory. We consider general random walks on barycentric subdivisions of simplices and use in a crucial way the properties of martingales as ``fair games''. The proof that this transform indeed produces dyadically smooth matrix weights requires in turn some delicate functional analytic manipulations.

    \item Finally, we use the \emph{iterated remodeling} technique from \cite{DoPeTrVo}. This was itself inspired from \cite{KaTr}, where a refinement of the method of remodeling from \cite{Na} was developed. The remodeling method ultimately goes back to \cite{Bo}. Finally, we show that the application of iterated remodeling on the example of the previous point upgrades dyadic smoothness to ``full'' smoothness and consequently the dyadic matrix $A_2$ characteristic not only just to the classical matrix $A_2$ characteristic as in \cite{DoPeTrVo} but in fact even to the ``fattened'' matrix $A_2$ characteristic we are considering here.
\end{itemize}

In the next section we give the most important definitions as well as a rigorous statement of our main result.

\section{Definitions and main result}
\label{s:intro}

Recall that a ($d$-dimensional) matrix weight on $\R$  
is a locally integrable function on $\R$ 
with values in the set of positive definite $d\times d$ matrices.

The weighted space $L^2(W)$ 
is defined as the space of all measurable functions $f:\R \to \mathbb{F}^d$, (here $\mathbb{F}=\R$, or $\mathbb{F}=\C$) 
for which
\[
\| f\|_{L^2(W)}^2:=\int (W(x)f(x), f(x))_{\mathbb{F}^d} \,\mathrm{d} x <\infty\,;
\]
here $(\cdot, \cdot)_{\mathbb{F}^d}$ means the standard inner, respectively hermitian product in $\mathbb{F}^d$. 

A matrix weight $W$ is said to satisfy the \emph{matrix $\mathbf{A}_2$ condition} (write $W\in\mathbf{A}_2$) if 
\begin{align}\label{e:A2}
[W]_{\mathbf{A}_2} := \sup_I \left\| \langle W\rangle_ I^{1/2} \langle W^{-1}\rangle_ I^{1/2} \right\|^2  < 
\infty\, ,
\end{align}
where $I$ ranges over all intervals.
The quantity $[W]_{\mathbf{A}_2}$ is called the \emph{$\mathbf{A}_2$ characteristic of the weight} 
$W$.  In the scalar case, when $W$ is a scalar weight $w$, this coincides with the classical $A_2$ 
characteristic $[w]_{A_2}$.

The matrix weight $W$ is said to satisfy the \emph{``fattened'' matrix $\mathbf{A}_2$ condition} (write $W\in\mathbf{A}^{\text{fat}}_2$) if 

\[[W]_{\mathbf{A}_2}^{\text{fat}}=\sup_{x\in\R,~t>0}\|W(x,t)^{1/2}W^{-1}(x,t)^{1/2}\|^2\]
is finite, where abusing notation we denote the Poisson extension of $W$ on the upper half-plane by the same letter, that is
\begin{equation*}
    W(x,t) := \frac{1}{\pi}\int_{\R}\frac{t}{(x-y)^2+t^2}W(y)\,\mathrm{d}y,\quad x\in\R,~t>0.
\end{equation*}
Let us observe here that is not important that one uses this particular integration kernel. In fact, our methods apply equally well for any ``reasonable'' approximation of the identity.

Let $\mathcal{H}$ denote the Hilbert transform, 
\begin{align*}
\mathcal{H} f(s) =\frac1\pi \text{ p.v.}\int_\R \frac{f(t)}{s-t} \,\mathrm{d} t,\quad s\in\R.
\end{align*}
In this paper, we show that the exponent of $3/2$ persists if we ``fatten'' the matrix $A_2$ characteristic.  More precisely, our main result is:

\begin{thm}\label{MainTheorem1}
There exists a constant $c>0$ such that for all sufficiently large 
$Q>0$ there exist a $2\times 2$ matrix weight $W=W_{Q}$, 
$[W]_{\mathbf{A}_2}^{\emph{fat}}\leq Q$ and 
a function $f\in L^2(W)$, $f:\R\to \R^2$, $f\ne 0$ such that
\begin{align*}
\| \mathcal{H} f \|_{L^2(W)} \geq c\,Q^{3/2} \|f\|_{L^2(W)}. 
\end{align*}
\end{thm}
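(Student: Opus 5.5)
We follow the three-step strategy outlined in the introduction: large step, then small step, then iterated remodeling. The starting point is the dyadic ``large step'' example of \cite{DoPeTrVo}. It gives a $2\times 2$ matrix weight $W^0$ on $[0,1)$ with dyadic characteristic $[W^0]_{\mathbf{A}_2^{\mathcal D}}\lesssim Q$, together with a function $f^0$. For these, a dyadic model $T$ of the Hilbert transform, built from Haar shifts adapted to the martingale structure of $W^0$, satisfies $\|Tf^0\|_{L^2(W^0)}\gtrsim Q^{3/2}\|f^0\|_{L^2(W^0)}$. The martingale picture behind this example is a random walk of the pair $(\langle W\rangle_I,\langle W^{-1}\rangle_I)$ inside the domain $\{(U,V):\ \|U^{1/2}V^{1/2}\|^2\le Q\}$. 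Each step of the walk is a large jump, between ``corners'' of a simplex-like configuration.

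The second step replaces each large jump by a long sequence of small ones, producing a dyadically smooth weight. We formulate dyadic smoothness as the following two-sided operator inequality between parent and children:
\[
(1-\delta)\langle W\rangle_{I}\le \langle W\rangle_{I_\pm}\le (1+\delta)\langle W\rangle_I ,
\]
and the same for $W^{-1}$, with $\delta$ small but independent of $Q$. To achieve this, we realize each large jump as the endpoint distribution of a martingale that walks along a barycentric subdivision of the simplex spanned by the target points. The fair-game property guarantees that the expected endpoint reproduces the original jump distribution. Consequently, the averages at the coarse scales, and hence the lower bound for $T$, are preserved up to constants. Since all intermediate points lie in the convex hull of points admissible for the $\mathbf{A}_2$ condition, and the map $(U,V)\mapsto \|U^{1/2}V^{1/2}\|^2$ is controlled on such hulls up to absolute constants, the dyadic characteristic stays $\lesssim Q$. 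The fine subdivision makes each increment comparable to the current average in the operator sense above.

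The third step applies the iterated remodeling of \cite{DoPeTrVo}, following \cite{KaTr}. Each Haar function at each stage is replaced by a highly oscillating periodized copy on a much finer scale. This turns the dyadic model $T$ into something whose action on $f$ is comparable to that of $\mathcal{H}$ up to small errors, so the $Q^{3/2}$ lower bound transfers to $\mathcal{H}$. What remains is to bound the Poisson characteristic. For $(x,t)$ with $t$ comparable to some dyadic scale, the Poisson extension $W(x,t)$ is an average of dyadic averages $\langle W\rangle_J$ over nearby intervals $J$, with tails decaying like $t/(x-y)^2$. Dyadic smoothness together with the periodization shows that the averages over neighbouring intervals, and over the geometrically many ancestors, are all comparable in the matrix order to $\langle W\rangle_I$ for a single interval $I$. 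The same holds for $W^{-1}$. We then conclude $\|W(x,t)^{1/2}W^{-1}(x,t)^{1/2}\|^2\lesssim [W]_{\mathbf{A}_2^{\mathcal D}}\lesssim Q$, and a final rescaling of $Q$ gives the theorem.

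The main obstacle is the second step. One must design the barycentric random walk so that every step is small in the \emph{matrix} order, not merely in norm, while staying inside the $\mathbf{A}_2$ domain. Noncommutativity makes both requirements delicate. The operator inequalities are what make the Poisson tail estimate in the last step work. I would first try geometric subdivisions in the logarithmic coordinates $\log U$ near the singular corners of the \cite{DoPeTrVo} configuration, where the eigenvalues degenerate.
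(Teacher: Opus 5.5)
Your three-stage outline matches the paper's. However, the step you treat as automatic is the one that carries the argument, and your justification for it fails. You infer from the fair-game property that ``the averages at the coarse scales, and hence the lower bound for $T$, are preserved''. In fact the old averages $\langle F\rangle_K$ are no longer attained at fixed scales, only on random stopping intervals (already $\langle \wt F\rangle_{I^0_\pm}=\langle F\rangle_{I^0}$). More to the point, the pairing $(\mathcal{H}^{\mathrm{dy}}\wt{\mathbf f},\wt{\mathbf g})$ is not determined by the endpoint distribution. Fairness only gives $\wt F=F\circ T$ with $T$ measure preserving, which keeps $\|\wt{\mathbf f}\|_{L^2(\wt W)}$ and $\|\wt{\mathbf g}\|_{L^2(\wt W^{-1})}$ unchanged. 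The sliced dyadic model couples the Haar coefficients on $I$ with those on $I_\pm$, so it must be recomputed over all the new small steps. Moreover, the old pairing is a sum over $K\in\mathscr F$ of local terms of no fixed sign, so every $K$ must be rescaled by the \emph{same} factor.

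The paper gets this from an exact discrete It\^o-type identity, and that identity dictates the design of the walk:
\begin{itemize}
\item The whole martingale $F=(W,W^{-1},\mathbf f,\mathbf g)$ is moved, not just the weight pair, since $\mathbf g$ is the dual test function.
\item In the interior, each step on an odd interval $I$ uses exactly $\varepsilon_I,\varepsilon_{I_+},\varepsilon_{I_-}$, with $\Delta_I,\Delta_{I_\pm}$ equal to $\frac1d\Delta_K,\frac1d\Delta_{K_\pm}$. Each such step therefore contributes exactly $d^{-2}$ times the local term of $K$.
\item On faces and edges only $\varepsilon_I$ is used, so $\Delta_{I_\pm}=0$ and these steps contribute nothing. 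The natural choice of $\varepsilon_{I_\pm}$ there would produce terms that cannot be estimated.
\item The interior exit time satisfies $d^2/6\le\mathbb{E}[\tau]\le 8d^2$, independently of $K$.
\end{itemize}
Hence $(\mathcal{H}^{\mathrm{dy}}\wt{\mathbf f},\wt{\mathbf g})=\frac{\mathbb{E}[\tau]}{d^2}(\mathcal{H}^{\mathrm{dy}}\mathbf f,\mathbf g)$. Without this, or a substitute, your second stage does not transport the $Q^{3/2}$ bound.

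Conversely, the issue you single out as the main obstacle is handled softly and needs no logarithmic or geometric subdivision. The large-step construction stops after finitely many generations, so only finitely many positive definite matrices $\langle W\rangle_{K_i}$, $\langle W^{-1}\rangle_{K_i}$ occur. Sibling averages of $\wt W$ are convex combinations of the same four grandchild averages, with barycentric coordinates differing by $O(1/d)$. An elementary lemma says two such combinations of $A_1,\dots,A_4>0$ are $(1+\e)$-comparable once the coordinates differ by at most $\e\min_i m(A_i)/\sum_i\|A_i\|$. So a plain uniform subdivision with $d$ large (depending on $Q$) already gives dyadic smoothness.

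Also, the $\mathbf{A}_2$ domain is not convex. Already for scalars, $(1,1)$ and $(\eta,\eta^{-1})$ have characteristic $1$, while their midpoint has characteristic about $1/(4\eta)$. So ``controlled on convex hulls of admissible points'' is false as stated. What works is that the hull is spanned by the grandchild averages of an admissible parent $K$, which gives $\sum_i\lambda_i\langle W\rangle_{K_i}\le 4\langle W\rangle_K\le 4Q\langle W^{-1}\rangle_K^{-1}\le 16Q\big(\sum_i\lambda_i\langle W^{-1}\rangle_{K_i}\big)^{-1}$.

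Your third stage is right in outline. The precise chain is:
\begin{itemize}
\item remodeling upgrades dyadic smoothness to strong dyadic smoothness (adjacent equal-length dyadic intervals, not just siblings);
\item Nazarov's lemma then gives $S_W<1+\e$ for all intervals;
\item hence $D_W\le 1+S_W<4$, which is exactly what makes the Poisson tail summable.
\end{itemize}
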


In fact, by picking a sufficiently small $c$ we can  state it for all $Q\ge 1$. By a simple 
reduction, we can state it for all dimensions $d\ge 2 $ of matrices.

\section{Preliminaries}

\subsection{``Fattened'' averages}
	
	Given a $(d\times d)$ matrix valued function $W$ on $\R$ with $W(x)>0$ for a.~e.~$x\in\R$, we define
	\begin{equation*}
		\langle W \rangle^{\text{fat}}_{\lambda}:=\frac{1}{\pi}\int_{\R}\frac{\mathrm{Im}(\lambda)}{|x-\lambda|^2}W(x)\,\mathrm{d}x,\quad\lambda\in\C_{+},
	\end{equation*}
    where $\C_{+} := \{\lambda\in\C:~\mathrm{Im}(\lambda)>0\}$. A simple calculation shows that
	\begin{equation}
		\label{fattened averages dominate usual averages}
		\langle W \rangle_{I_{\lambda}}
		\lesssim\langle W \rangle^{\text{fat}}_{\lambda},
	\end{equation}
	where $I_{\lambda}:=[\text{Re}(\lambda)-\text{Im}(\lambda),\text{Re}(\lambda)+\text{Im}(\lambda)]$, for all $\lambda\in\C_{+}$. The fattened matrix $A_2$ characteristic can be then rewritten as
	\begin{equation*}
		[W]^{\text{fat}}_{\mathbf{A}_2}=\sup_{\lambda\in\C_{+}}\Vert(\langle W \rangle^{\text{fat}}_{\lambda})^{1/2}(\langle W^{-1} \rangle^{\text{fat}}_{\lambda})^{1/2}\Vert^2<\infty.
	\end{equation*}
	Using \eqref{fattened averages dominate usual averages} (first for $W$ and then for $W^{-1}$) coupled with \cite[Lemma 3.1]{DoPeTrVo} we deduce
	\begin{align*}
		\langle W \rangle_{I_{\lambda}}
		\lesssim\langle W \rangle^{\text{fat}}_{\lambda}\leq[W]_{A_2}^{\mathrm{fat}}(\langle W^{-1}\rangle^{\text{fat}}_{\lambda})^{-1}\lesssim[W]_{A_2}^{\mathrm{fat}}(\langle W^{-1}\rangle_{I_{\lambda}})^{-1},
	\end{align*}
	for all $\lambda\in\C_{+}$. One more application of \cite[Lemma 3.1]{DoPeTrVo} yields then
	\begin{align*}
		[W]_{\mathbf{A}_2}\lesssim[W]^{\text{fat}}_{\mathbf{A}_2}.
	\end{align*}

	\subsection{The doubling condition}
	
	A $(d\times d)$ matrix weight $W$ on $\R$ is said to satisfy \emph{the doubling condition with constant $C>0$} if
	\begin{equation*}
		W(2I)\leq C\,W(I),
	\end{equation*}
	for all intervals $I\subseteq\R$. Here we denote $W(I):=\int_{I}W(x)\,\mathrm{d}x$. The best such constant $C$ is denoted by $D_{W}$.
	
	A standard calculation shows that if $D_{W}<4$, then
	\begin{equation}
		\label{usual averages dominate fattened averages under doubling}
		\langle W \rangle^{\text{fat}}_{\lambda}\lesssim\langle W \rangle_{I_{\lambda}},\quad\forall\lambda\in\C_{+}.
	\end{equation}
	For the sake of completeness, we briefly review this calculation: writing $\lambda=t+iy$ and $I=I_{\lambda}$, we compute
	\begin{align*}
		\pi\,\langle W \rangle^{\text{fat}}_{\lambda}&=\int_{\R}\frac{y}{(t-x)^2+y^2}W(x)\,\mathrm{d}x\\
		&=
		\int_{I}\frac{y}{(x-t)^2+y^2}W(x)\,\mathrm{d}x+\sum_{n=1}^{\infty}\int_{2^nI\setminus 2^{n-1}I}\frac{y}{(x-t)^2+y^2}W(x)\,\mathrm{d}x\\
		&\leq\frac{1}{y}\int_{I}W(x)\,\mathrm{d}x+\sum_{n=1}^{\infty}\int_{2^nI}\frac{y}{(2^{n-1}y)^2}W(x)\,\mathrm{d}x\\
		&=\frac{1}{y} W(I)+\frac{4}{y}\sum_{n=1}^{\infty}\frac{1}{4^{n}}W(2^{n}I)
		\leq\frac{1}{y} W(I)+\frac{4}{y}\sum_{n=1}^{\infty}\frac{1}{4^{n}}(D_{W})^nW(I)\\
		&=\left(2+8\sum_{n=1}^{\infty}\left(\frac{D_{W}}{4}\right)^n\right)\langle W\rangle_{I},
	\end{align*}
	with $2+8\sum_{n=1}^{\infty}\left(\frac{D_{W}}{4}\right)^n<\infty$.
	
	Therefore, if $W$ is a matrix $\mathbf{A}_2$ weight with $D_{W}<4$ and $D_{W^{-1}}<4$, then an application of \cite[Lemma 3.1]{DoPeTrVo} yields
	\begin{equation*}
		[W]^{\text{fat}}_{\mathbf{A}_2}\lesssim[W]_{\mathbf{A}_2}.
	\end{equation*}
	Thus, to achieve the passage from the classical matrix $A_2$ condition to the ``fattened'' one, it suffices to achieve control of doubling constants.

	\subsection{Smoothness constants}
	
	As in the scalar case \cite{KaTr, Na}, we control so-called \emph{smoothness constants}, instead of directly controlling doubling constants. These smoothness constants were originally defined for scalar weights by Nazarov \cite{Na}.
	
	Given a matrix weight $W$, we define the \emph{smoothness constant} $S_{W}$ as the best constant $C>0$ such that 
	\begin{equation*}
		\langle W\rangle_{I_{+}}\leq C\langle W\rangle_{I_{-}}\quad\text{and}\quad\langle W\rangle_{I_{-}}\leq C\langle W\rangle_{I_{+}},
	\end{equation*}
	for all intervals $I\subseteq\R$. It is easy to see that $D_{W}\leq S_{W}+1$: setting $I_1:=(2I)_{--}$ and $I_2:=(2I)_{++}$ and observing that $I_{-}=(2I)_{-+}$ and $I_{+}=(2I)_{+-}$, we have
	\begin{align*}
		W(2I)&=W(I_1)+W(I_{-})+W(I_{+})+W(I_2)\\
		&\leq S_{W}W(I_{-})+W(I_{-})+W(I_{+})+S_{W}W(I_{+})=(1+S_{W}) W(I).
	\end{align*}

	We define the \emph{dyadic smoothness constant} $S_{W}^{\text{dy}}$ as the best constant $C$ such that
	\begin{equation*}
		\langle W\rangle_{I_{+}}\leq C\langle W\rangle_{I_{-}}\quad\text{and}\quad\langle W\rangle_{I_{-}}\leq C\langle W\rangle_{I_{+}},
	\end{equation*}
	for all $I\in\cD$. It is immediate that $1\leq S_{W}^{\text{dy}}\leq S_{W}$.
	
	Finally, we define the \emph{strong dyadic smoothness constant} $S_{W}^{\text{sdy}}$ as the best constant $C$ such that
	\begin{equation*}
		\langle W\rangle_{I}\leq C\langle W\rangle_{J}\quad\text{and}\quad\langle W\rangle_{J}\leq C\langle W\rangle_{I},
	\end{equation*}
	for all \emph{adjacent} intervals $I,J\in\cD$ (this means that $I,J$ are disjoint and share an endpoint) with $|I|=|J|$.

    It was a crucial observation of Nazarov in the scalar case \cite{Na} that sufficient control over the strong dyadic smoothness constant ensures control over the smoothness constant. This remains true in the matrix case:

    \begin{lem}
		Let $\e>0$. Then, there exists $\delta=\delta(\e)>0$, such that the following holds. If $W$ is a matrix weight with $S_{W}^{\emph{sdy}}<1+\delta$, then $S_{W}<1+\e$.
	\end{lem}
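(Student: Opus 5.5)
Given an arbitrary interval $I$ with halves $I_-$, $I_+$, the plan is to approximate $I_\pm$ by unions of small dyadic intervals of a common length and to compare those pieces one at a time using the strong dyadic smoothness hypothesis. Since all inequalities are in the Loewner order, it suffices to work with the scalar measures $\mu_e(E):=\int_E (W(x)e,e)\,\mathrm{d}x$ for a fixed vector $e$. The hypothesis $S_W^{\mathrm{sdy}}<1+\delta$ says exactly that $\mu_e(J)\le(1+\delta)\mu_e(J')$ for all adjacent dyadic $J,J'$ of equal length, uniformly in $e$. The conclusion $S_W<1+\e$ is likewise the scalar statement $\mu_e(I_+)\le(1+\e)\mu_e(I_-)$ for every interval $I$, uniformly in $e$. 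So the matrix structure drops out entirely, and the task is Nazarov's scalar lemma with constants independent of $e$.

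For the scalar argument, fix $I$ with $|I|=2h$ and choose $k$ with $2^{-k}\le h<2^{-k+1}$. Pick a large integer $N=N(\e)$ and look at dyadic intervals of length $\ell=2^{-k-N}$. Each of $I_-$ and $I_+$ contains about $m=h/\ell\sim 2^N$ such intervals, with two partial intervals at the ends. Adjacent $\ell$-intervals have comparable mass, by a factor $1+\delta$. Consequently any two $\ell$-intervals at distance at most $L$ (measured in units of $\ell$) have masses within a factor $(1+\delta)^{L}$.

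The comparison of $I_-$ with $I_+$ then goes by pairing. The $j$-th $\ell$-interval of $I_-$ is matched with the $j$-th $\ell$-interval of $I_+$, and these lie about $m$ steps apart. That gives a factor $(1+\delta)^{m}\approx(1+\delta)^{2^N}$. The two boundary fragments carry at most a proportion $O(1/m)$ of the total mass, since each is dominated by a full neighbouring $\ell$-interval up to a factor $1+\delta$. Altogether
\[
\mu_e(I_+)\le\bigl((1+\delta)^{2^N}+C2^{-N}\bigr)\mu_e(I_-).
\]
Choosing $N$ with $C2^{-N}<\e/2$ and then $\delta$ so that $(1+\delta)^{2^N}<1+\e/2$ yields the claim.

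The main obstacle is that the two halves of $I$ might straddle a dyadic point of large generation, where $\ell$-intervals are not linked through the dyadic tree. Strong smoothness handles this precisely because it compares adjacent intervals across any dyadic boundary, which is why the chain of $\ell$-intervals is used rather than parent–child comparisons. The remaining care is in the boundary fragments. Their masses must be bounded by a full $\ell$-interval's mass, which holds simply because a fragment is contained in one $\ell$-interval, and that interval is comparable to its neighbours lying fully inside $I_\pm$. This keeps the error of relative size $O(2^{-N})$ uniformly in $e$.
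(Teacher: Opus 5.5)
Your argument is correct and is essentially the paper's (Nazarov's) chaining argument: you discretize at a dyadic scale $\ell$ with both $\ell/|I|$ and $\delta|I|/\ell$ small, chain adjacent equal-length dyadic intervals via strong smoothness, and absorb the $O(\ell/|I|)$ boundary pieces. The $O(1/m)$ fragment bound implicitly uses that all $\ell$-intervals meeting $I$ are comparable within $(1+\delta)^{O(m)}$, not just neighbouring ones, and the paired intervals are up to about $2^{N+1}$ steps apart rather than $2^N$, but neither point affects the proof. The remaining differences are cosmetic: the paper takes $\ell\approx\sqrt{\delta}|I|$ and compares each half $I_\pm$ to a single dyadic interval at the centre of $I$ rather than pairing the halves directly, and it works in the Loewner order instead of scalarizing via $\mu_e$.
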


	\begin{proof}
		The proof is almost identical to the one in the scalar case \cite{Na}. Nevertheless, we include most details.
		
		Pick $\delta\in\left(0,\frac{1}{4}\right)$ with
		\begin{equation*}
			(1-2\sqrt{\delta})(1+\delta)^{-2/\sqrt{\delta}}>(1+\e)^{-1/2}
		\end{equation*}
		and
		\begin{equation*}
			(1+2\sqrt{\delta})(1+\delta)^{2+2/\sqrt{\delta}}<(1+\e)^{1/2}.
		\end{equation*}
		
		\medskip
		
		\textbf{Claim.} For any interval $I\subseteq\R$ and for any $J\in\cD$ containing one of the two endpoints of $I$ with $|J|\leq\sqrt{\delta}|I|\leq 2|J|$, we have
		\begin{equation*}
			\langle W\rangle_{J}\leq(1+\e)^{1/2}\langle W\rangle_{I}
		\end{equation*}
		and
		\begin{equation*}
			\langle W\rangle_{I}\leq(1+\e)^{1/2}\langle W\rangle_{J}.
		\end{equation*}
		
		\medskip
		
		Let us assume for a moment the claim. Let $I\subseteq\R$ be an interval. Pick $J\in\cD$ containing the center of $I$ with $2|J|\leq\sqrt{\delta}|I|\leq 4|J|$. The claim, applied to $I_{+},J$, respectively $I_{-},J$, yields
		\begin{equation*}
			\langle W\rangle_{J}\leq(1+\e)^{1/2}\langle W\rangle_{I_{-}}\quad\text{and}\quad\langle W\rangle_{I_{-}}\leq(1+\e)^{1/2}\langle W\rangle_{J},
		\end{equation*}
		respectively
		\begin{equation*}
			\langle W\rangle_{J}\leq(1+\e)^{1/2}\langle W\rangle_{I_{+}}\quad\text{and}\quad\langle W\rangle_{I_{+}}\leq(1+\e)^{1/2}\langle W\rangle_{J}.
		\end{equation*}
		It is then immediate that $\langle W\rangle_{I_{-}}\leq(1+\e)\langle W\rangle_{I_{+}}$ and $\langle W\rangle_{I_{+}}\leq(1+\e)\langle W\rangle_{I_{-}}$.
		
		We now show the claim. Set
		\begin{equation*}
			J_{*}:=\{K\in\cD:~|K|=|J|,\,K\subseteq I\},\quad I_{*}:=\bigcup_{K\in J_{*}}K.
		\end{equation*}
		Then $J_{*}\neq\emptyset$, since $|J|<\frac{1}{2}|I|$. Clearly
		\begin{equation*}
			\# J_{*}\leq\frac{|I|}{|J|}\leq\frac{2}{\sqrt{\delta}}.
		\end{equation*}
		For all $K\in J_{*}$, we can find $\ell\in\{1,\ldots,\#J_{*}\}$ and
		\begin{equation*}
			J_1,\ldots,J_{\ell+1}\in\cD\text{ of length }|J|
		\end{equation*}
		such that
		\begin{equation*}
			J_1=K,\quad J_{\ell+1}=J
		\end{equation*}
		and $J_i,J_{i+1}$ are adjacent, for all $i=1,\ldots,\ell$, thus
		\begin{equation*}
			\langle W\rangle_{J_i}\geq(1+\delta)^{-1}\langle W\rangle_{J_{i+1}},\quad i=1,\ldots,\ell,
		\end{equation*}
		thus
		\begin{equation*}
			\langle W\rangle_{K}\geq(1+\delta)^{-\ell}\langle W\rangle_{J}\geq(1+\delta)^{-2\sqrt{\delta}}\langle W\rangle_{J}.
		\end{equation*}
		Thus
		\begin{align*}
			\langle W\rangle_{I_{*}}&=\frac{1}{|I_{*}|}\sum_{K\in J_{*}}W(K)\geq\frac{(1+\delta)^{-2/\sqrt{\delta}}}{|I_{*}|}\sum_{K\in J_{*}}|K|\langle W\rangle_{J}\\
			&=\frac{(1+\delta)^{-2/\sqrt{\delta}}}{|I_{*}|}|J|(\# J_{*})\langle W\rangle_{J}
			=(1+\delta)^{-2/\sqrt{\delta}}\langle W\rangle_{J}.
		\end{align*}
		Also $|I_{*}|\geq|I|-2|J|\geq(1-2\sqrt{\delta})|I|$ and $I_{*}\subseteq I$, so
		\begin{align*}
			\langle W\rangle_{I}\geq\frac{|I_{*}|}{|I|}\langle W\rangle_{I_{*}}\geq
			(1-\sqrt{\delta})(1+\delta)^{-2/\sqrt{\delta}}\langle W\rangle_{J}\geq(1+\e)^{-1/2}\langle W\rangle_{J},
		\end{align*}
		therefore $\langle W\rangle_{J}\leq(1+\e)^{1/2}\langle W\rangle_{I}$.
		
		Finally, adapting the corresponding argument from the scalar case as above, we also get $\langle W\rangle_{I}\leq(1+\e)^{1/2}\langle W\rangle_{J}$.
	\end{proof}
	
\section{``Large step'' examples}
\label{sec:large_step_examples}

In this section we recall the construction of the example yielding the lower bound for the dyadic Hilbert transform, $\mathcal{H}^{\mathrm{dy}}$, from \cite{DoPeTrVo}.  Our notation and terminology follow largely \cite{DoPeTrVo}, where we refer the reader for details and proofs. We place particular emphasis on the ``geometric structure'' of the constructed dyadic martingales.

\subsection{Construction of the weights}

We begin by recalling the construction of the $2\times 2$ matrix weights $W$ and $V=W^{-1}$ from \cite[Section 4]{DoPeTrVo}. Let $Q$ be a (sufficiently) large positive real number. Let $\cD$ be the family of all dyadic subintervals of the unit interval $I^{0}:=[0,1)$. Given any dyadic subinterval $I$ of $I^{0}$, such that the averages $\langle W\rangle_{I}$ and $\langle V\rangle_{I}$ are already known, the assignment of averages to the dyadic children $I_{\pm}$ of $I$ follows exactly one of the following principles:
\begin{itemize}
    \item the \emph{rotation} operation, in which case we call $I$ a \emph{blue interval} (or in the terminology of \cite{DoPeTrVo}, a \emph{stopping interval})
    \item the \emph{streching} operation, in which case we call $I$ a \emph{red interval}
    \item the \emph{terminating} operation \cite[Lemma 3.4]{DoPeTrVo}, in which case we call $I$ a \emph{green interval} (or in the terminology of \cite{DoPeTrVo}, a \emph{terminal interval})
    \item $W,V$ are constant on $I$, in which case we call $I$ a \emph{black interval}.
\end{itemize}

The construction begins by declaring $I^{0}$ to be a blue interval. If $I$ is now any blue interval, then each of the two dyadic children $J=I_{\pm}$ of $I$ is declared to be a red interval.

If $J$ is any red interval, then $J_{+}$ is declared to be a blue interval, while $K=J_{-}$ is declared to be a green interval.

Finally, if $K$ is any green interval, then both children of $K$ (and with them all other dyadic descendants of $K$) are declared to be black intervals.

We set $\mathscr{S}_0:=\{I^0\}$. We denote the collection of all intervals $J=I_{\pm}$ with $I\in\mathscr{S}_n$ by $\mathscr{F}_{n+1}$. Moreover, the collection of all intervals $J_{+}$ with $J\in\mathscr{F}_{n+1}$ is denoted by $\mathscr{S}_{n+1}$.

As presented, this construction could proceed for infinitely many steps. As commented in \cite{DoPeTrVo}, this would create no issues. However, the construction of \cite{DoPeTrVo} terminates after finitely many steps. That means, there is some (large enough) positive integer $N_0$, such that the intervals $I\in\mathscr{S}_{N_0}$ are exceptionally declared to be green intervals, and all dyadic descendants of $I$ are declared to be black intervals. The termination after finitely many steps will turn out to be important in Subsection~\ref{subs:small_step_dyadic_smoothness} below.

Thus, we obtain finite sequences
\begin{gather*}
    \mathscr{S}_0,\mathscr{S}_1,\ldots,\mathscr{S}_{N_0}\\
    \mathscr{F}_1,\mathscr{F}_2,\ldots,\mathscr{F}_{N_0}.
\end{gather*}
Let us denote $\mathscr{S}=\bigcup_{n\geq0}\mathscr{S}_n$, $\mathscr{S}_{>}:=\bigcup_{n>0}\mathscr{S}_n$, $\mathscr{F}:=\bigcup_{n>0}\mathscr{F}_{n}$. We also denote by $\mathscr{G}$ the family of all green intervals except for those in $\mathscr{S}_{N_0}$. Finally, we denote by $\mathscr{E}$ the family of all black intervals, including the children of the intervals in $\mathscr{S}_{N_0}$.

The above construction is adapted to several parameters that depend only on $Q$, as explained in \cite[Section 4]{DoPeTrVo}, so that $[W]^{\mathrm{dy}}_{\mathbf{A}_2}=Q$.

\subsection{The lower bound for the dyadic Hilbert transform}
\label{subs:damage_large_step}

Next, we recall the dyadic Hilbert transform $\mathcal{H}^{\mathrm{dy}}$ from \cite[Section 6]{DoPeTrVo}. One has
\begin{equation*}
    \mathcal{H}^{\mathrm{dy}}f=c_1(Sf-S^{*}f)+c_2S_0f,
\end{equation*}
where the operators $S,S^{*}$ and $S_{0}$ are defined by
\begin{gather*}
    Sf:=\sum_{I\in\cD_{\mathrm{o}}}(f,h_{I})_{L^2}[h_{I_{+}}-h_{I_{-}}],\\
    S^{*}f:=\sum_{I\in\cD_{\mathrm{o}}}[(f,h_{I_{+}})_{L^2}-(f,h_{I_{-}})_{L^2}]h_{I},\\
    S_0f:=\sum_{I\in\cD_{\mathrm{o}}}[(f,h_{I_{+}})_{L^2}h_{I_{-}}-(f,h_{I_{-}})_{L^2}h_{I_{+}}],
\end{gather*}
and $c_1,c_2$ are real positive constants whose values are determined appropriately in \cite{DoPeTrVo}; here we will use the same ones. We emphasize that these operators are ```sliced'' and indexed by odd dyadic intervals $I\in \cD_{\mathrm{o}}$ if and only if $I\in \cD$ and $|I|=2^{-k}$ with $k$ an odd integer.

For some nonzero vector $e\in\R^2$ chosen as in \cite[Lemma 5.1]{DoPeTrVo}, we define as in \cite[Sections 6 and 7]{DoPeTrVo} the functions $\mathbf{f}:=\mathbf{1}_{I^{0}}W^{-1}e$ and $\mathbf{g}:=W\mathcal{H}^{\mathrm{dy}}\mathbf{f}$. Observe that by construction only intervals in $\mathscr{F}\cup\mathscr{E}$ are odd intervals. Moreover, the function $f$ is constant on any interval $I\in\mathscr{E}$. Thus, we actually have
\begin{gather*}
    S\mathbf{f}=\sum_{I\in\mathscr{F}}(\mathbf{f},h_{I})_{L^2}[h_{I_{+}}-h_{I_{-}}],\\
    S^{*}\mathbf{f}=\sum_{I\in\mathscr{F}}[(\mathbf{f},h_{I_{+}})_{L^2}-(\mathbf{f},h_{I_{-}})_{L^2}]h_{I},\\
    S_0f=\sum_{I\in\mathscr{F}}[(\mathbf{f},h_{I_{+}})_{L^2}h_{I_{-}}-(\mathbf{f},h_{I_{-}})_{L^2}h_{I_{+}}].
\end{gather*}

Delicate computations in \cite[Section 6]{DoPeTrVo} show that
\begin{equation*}
    |(\mathcal{H}^{\mathrm{dy}}\mathbf{f},\mathbf{g})_{L^2}|\gtrsim Q^{3/2}\|\mathbf{f}\|_{L^2(W)}\|\mathbf{g}\|_{L^2(W^{-1})}.
\end{equation*}

\subsection{The geometric picture of the full martingale}

Let us consider the bounded function $F:=(W,V,\mathbf{f},\mathbf{g})$, which is defined on $I^{0}$ and takes values in the vector space $\mathscr{X}:=\R^{2\times 2}\times\R^{2\times 2}\times\R^2\times\R^2$. The martingale induced by it with respect to the dyadic filtration on $I^{0}$ can be fully described through the set of averages $\{\langle F\rangle_{I}\}_{I\in\cD}$. Let us observe the following:
\begin{itemize}
\item By construction, each of the functions $W,V,\mathbf{f}$ is constant on each interval $I\in\mathscr{E}$.

\item For each $I\in\cD$ we have $(\mathcal{H}^{\mathrm{dy}}\mathbf{f},h_{I})_{L^2}=0$ whenever $I$ does not satisfy any of the following: $I\in\mathscr{S}$ or $I\in\mathscr{F}$ or $I\in\mathscr{G}$. In particular, the function $\mathcal{H}^{\mathrm{dy}}\mathbf{f}$ is constant on each interval $I\in\mathscr{E}$. Since this holds also for $W$, we conclude that this is true for $\mathbf{g}$ as well.
\end{itemize}
 Thus, the arrangement of the points $\{\langle F\rangle_{I}\}_{I\in\cD}$ in $\mathscr{X}$ consists of the following parts:
\begin{itemize}
    \item There is one straight line segment with middle point $\langle F\rangle_{I^{0}}$. Its endpoints are occupied by the averages of $F$ over the intervals $I^{0}_{+}$ and $I^{0}_{+}$, which both belong to the family $\mathscr{F}$. Note that this segment is by construction non-degenerate, i.e.~$\langle F\rangle_{I^{0}_{+}}\neq\langle F\rangle_{I^{0}_{-}}$.

    \item To each $K\in\mathscr{F}$, there corresponds a (possibly degenerate) tetrahedron, denoted in the sequel by $\mathscr{C}(K)$, with vertices $\langle F\rangle_{K_{\pm\pm}}$ and center (of mass) at $\langle F\rangle_{K}$. The averages $\langle F\rangle_{K_{\pm}}$ are middle points of two edges of this tetrahedron. We have $K_{--},K_{-+}\in\mathscr{E}$. Moreover, if $K\notin\mathscr{F}_{N_0}$, then $K_{++},K_{+-}\in\mathscr{F}$ (so the averages corresponding to these intervals are centers of further tetrahedra of the same type), while if $K\in\mathscr{F}_{N_0}$, then $K_{++},K_{+-}\in\mathscr{E}$.
\end{itemize}
We emphasize that the intervals in $\mathscr{E}$ form a partition of $I^{0}$ and
\begin{equation}
    \label{eq:F_decomposition}
    F=\sum_{L\in\mathscr{E}}\langle F\rangle_{L}\mathbf{1}_{L}.
\end{equation}

\section{Discrete ``small step'' random walks on a \texorpdfstring{$3$}{3}-simplex}
\label{sec:small_step_random_walks_tetrahedron}

As explained above, the building block of the ``large step'' example of the last section is essentially a random walk (more precisely, a \emph{martingale}) on a (possibly degenerate) $3$-simplex in some vector space, which begins on the center of the simplex and reaches in only one step (or two steps, depending on point of view) almost surely one of the four vertices, each with same probability $1/4$. Since the affine dimension of $\R^3$ is exactly 3, it is the correct ambient space to study random walks on $3$-simplices. In this section we describe a particular discrete random walk (more precisely, a discrete martingale) on any given nondegenerate $3$-simplex in $\R^3$, which also begins on the center and ends almost surely on one of the vertices, each with the same probability $1/4$, but in many more steps. We employ the terminology ``small step'' for this random walk, since its increments in its main part are rescaled versions of the increments of the ``large step'' random walk by a factor that is much smaller than 1.

Although we could describe such ``small step'' random walks quite generally, we restrict ourselves to a concrete model suiting the ``small step'' example in the next section. In particular, the various choices for the increments of the random walk stipulated here are motivated by a desire to be able to estimate the action of the dyadic Hilbert transform on our ``small step'' functions as easily as possible in a later section. We begin the preparation for these estimates already in this section.

\subsection{Setup and barycentric coordinates}

We consider four \emph{affinely independent} points $a_1,a_2,a_3,a_4$ points in $\R^3$. Affine independence means that for all $\lambda_1,\lambda_2,\lambda_3,\lambda_4\in\R$ we have
\begin{equation*}
    \sum_{i=1}^{4}\lambda_i=0~\text{ and }~\sum_{i=1}^{4}\lambda_ia_i=0\quad\Longrightarrow\quad\lambda_i=0,~i=1,2,3,4.
\end{equation*}
Then, we can consider the nondegenerate $3$-simplex $\mathscr{K}:=\mathrm{Conv}(a_1,a_2,a_3,a_4)$ in $\R^3$ with \emph{vertices} $a_1,a_2,a_3,a_4$. Nondegeneracy means that $\mathscr{K}$ has nonempty topological interior as a subset of $\R^3$. Its \emph{barycenter is}
\begin{equation*}
    a_0:=\frac{1}{4}a_1+\frac{1}{4}a_2+\frac{1}{4}a_3+\frac{1}{4}a_4.
\end{equation*}
Due to affine independence of $a_1,a_2,a_3,a_4$ we can identify the simplex $\mathscr{K}$ with the set of its vertices. Thus, we will also be calling $\{a_1,a_2,a_3,a_4\}$ a $3$-simplex.

We observe that for all $a\in \mathscr{K}$ there exist unique $\lambda_1,\lambda_2,\lambda_3,\lambda_4\in[0,1]$, called \emph{barycentric cooordinates} of $a$, such that
\begin{equation*}
    \sum_{i=1}^{4}\lambda_i=1 \quad\text{and}\quad a=\sum_{i=1}^{4}\lambda_ia_i.
\end{equation*}
Observe that $a$ lies on a $2$-subsimplex, or \emph{face}, of $\mathscr{K}$ if and only if there is $i\in\{1,2,3,4\}$ with $\lambda_i=0$. Similarly, $a$ lies on a $1$-subsimplex, or \emph{edge}, of $\mathscr{K}$ if and only if there are $i,j\in\{1,2,3,4\}$ with $i\neq j $ and $\lambda_i=\lambda_j=0$. Finally, $a$ lies on a $0$-subsimplex, or \emph{vertex}, of $\mathscr{K}$ if and only if there exists $i\in\{1,2,3,4\}$ with $\lambda_j=0$ for $j\neq i$ (equivalently, $\lambda_i=1$).

Thus, describing a stochastic process whose terms take values on $\mathscr{K}$ amounts to describing four real-valued stochastic processes that at each time can be barycentric coordinates, i.e.~they take nonnegative real values and sum up to exactly 1.

In the rest of this section we keep this setup without further mention.

\subsection{Barycentric subdivision}

To make the notion of ``small step'' precise, we will need the concept of \emph{barycentric subdivision}, which plays a prominent role in other mathematical fields like algebraic topology and numerical analysis.

Let $d$ be a positive integer. Then, the collection $\cA$ of all points $a\in \mathscr{K}$ whose barycentric coordinates have the form
\begin{equation}
    \label{eq:barycentric_subdivision}
    \left(\frac{m_1}{4d},\frac{m_2}{4d},\frac{m_3}{4d},\frac{m_4}{4d}\right)\text{ with }m_i\in\N_{0},~i=1,2,3,4\text{ and }\sum_{i=1}^{4}m_i=4d
\end{equation}
is termed a \emph{barycentric subdivision of the $3$-simplex $\{a_1,a_2,a_3,a_4\}$}. Observe that the vertices and the barycenter of $\mathscr{K}$ belong to $\cA$. The ``large step'' random walk takes place only on the barycenter and the vertices. The barycentric subdivision enriches the simplex, turning it into in a ``lattice'' of many more points, on which the ``small step'' random walk will take place.

\subsection{Reaching the vertices}

Before describing our random walk we will need two technical results. We will use them below to justify why it reaches some vertex almost surely in finite time, and why each vertex is reached with the same probability, namely $1/4$.

\subsubsection{Finiteness of hitting times} As we will see, the random walk begins in the interior of the simplex and will reach, or \emph{hit}, in the generic case first a face of the simplex, then an edge, and finally a vertex. Each time a subsimplex is reached, a different rule for building the random walk will apply. Thus, hitting times, or in other words \emph{stopping times} govern these transitions. These stopping times will have to be finite almost surely.

To build our random walk, we will be using the Rademacher functions, or equivalently Haar functions. On the level of barycentric coordinates, we get four real-valued stochastic processes that very much resemble the classical symmetric $\Z$-valued random walk that begins at 0. It is well-known in the classical case that almost surely every path reaches every integer (see for example \cite{Klenke2020}). Here we need a slightly more general version of this fact. For the reader's convenience we give a statement and a proof below that are more than sufficient for our purposes, adapting one of the proofs in the classical case. We note that the crucial property lurking in the background is that such random walks are martingales that are very quickly $L^2$ divergent. For such martingales it is known that the probability of their remaining inside bounded regions decays rapidly. See for example \cite{Makarov1990} as well as \cite[Lemma 9]{KS2024} for different versions of such results.

\begin{lem}
\label{lem:finiteness_hitting_times}
Let $(\Omega,\cF,\mathbb{P})$ be a probability space. Let $\{\omega_n\}^{\infty}_{n=1}$ be a sequence of $\Z$-valued, independent, identically distributed, not $\mathbb{P}$-a.s.~constant random variables on $\Omega$ with $\mathbb{E}[\omega_1]=0$. Let $a\in\Z$. Consider the sequence $\{S_n\}_{n=0}^{\infty}$ of random variables on $\Omega$ given by
\begin{equation*}
	S_0\equiv a,\quad S_n:=a+\sum_{k=1}^{n}\omega_k,~n=1,2,\ldots.
\end{equation*}

\begin{enumerate}
    \item Assume that there exists $M\in\Z$ with $\omega_n\leq M$ $\mathbb{P}$-a.s., for every $n=1,2,\ldots$. Let $b\in\Z$ with $a<b$. Set
    \begin{equation*}
	    \tau:=\inf\{n\geq1:~S_n\geq b\}.
    \end{equation*}
    Then, we have $\tau<\infty$ $\mathbb{P}$-a.s. Moreover, if $\omega_n\leq 1$ $\mathbb{P}$-a.s., for every $n=1,2,\ldots$, then $S_{\tau}=b$ $\mathbb{P}$-a.s.

    \item Assume that there exists $m\in\Z$ with $\omega_n\geq m$ $\mathbb{P}$-a.s., for every $n=1,2,\ldots$. Let $c\in\Z$ with $a>c$. Set
    \begin{equation*}
	    \sigma:=\inf\{n\geq1:~S_n\leq c\}.
    \end{equation*}
    Then, we have $\sigma<\infty$ $\mathbb{P}$-a.s. Moreover, if $\omega_n\geq 1$ $\mathbb{P}$-a.s., for every $n=1,2,\ldots$, then $S_{\sigma}=c$ $\mathbb{P}$-a.s.
\end{enumerate}
\end{lem}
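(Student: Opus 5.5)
The plan is to use the martingale convergence theorem rather than any recurrence theory for random walks. Part (2) will then follow from part (1) by symmetry.

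\textbf{Part (1).} The first step is to note that $S_n$ is a martingale with respect to the natural filtration. This holds because the $\omega_n$ are integrable (they are bounded above by $M$ and have finite mean), independent, and of mean zero. Hence the stopped process $S_{n\wedge\tau}$ is also a martingale.

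Next I would bound the stopped process from above. For $n<\tau$ we have $S_n\le b-1$. At time $\tau$ we have $S_\tau\le S_{\tau-1}+M\le b-1+M$. So $S_{n\wedge\tau}\le b+M$ for all $n$. It follows that $X_n:=b+M-S_{n\wedge\tau}$ is a nonnegative martingale, and in particular a nonnegative supermartingale. By the supermartingale convergence theorem, $X_n$ converges $\mathbb{P}$-a.s. to a finite limit. Consequently $S_{n\wedge\tau}$ converges a.s. to a finite limit.

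The key step, and the only place where the hypotheses on the distribution enter, is to exclude the event $\{\tau=\infty\}$. On this event $S_n$ itself converges to a finite limit. Since $S_n$ is $\Z$-valued, it must then be eventually constant, i.e. $\omega_n=0$ for all large $n$. Because $\omega_1$ is not a.s. constant and has mean $0$, we have $p:=\mathbb{P}(\omega_1\neq0)>0$. The events $\{\omega_n\neq0\}$ are independent with $\sum_n\mathbb{P}(\omega_n\neq 0)=\sum_n p=\infty$. By the second Borel--Cantelli lemma, a.s. infinitely many $\omega_n$ are nonzero. Therefore $\mathbb{P}(\tau=\infty)=0$.

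For the ``moreover'' statement, assume $\omega_n\le 1$. Two cases arise. If $\tau\ge2$, then $S_{\tau-1}\le b-1$, so $b\le S_\tau\le S_{\tau-1}+1\le b$. If $\tau=1$, then $b\le S_1\le a+1\le b$. In both cases $S_\tau=b$ a.s.

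\textbf{Part (2).} I would apply part (1) to the reflected steps $\omega_n':=-\omega_n$, with starting point $-a$ and target level $-c>-a$, and upper bound $-m$. The reflected steps are still i.i.d., $\Z$-valued, not a.s. constant, and of mean zero. The hitting time of level $-c$ by $-S_n$ is exactly $\sigma$, so part (1) gives $\sigma<\infty$ a.s.

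The exact-hit statement in part (2) needs a correction. As written, the hypothesis $\omega_n\ge1$ would contradict mean zero for a non-constant variable. I would therefore read it as $\omega_n\ge-1$, the natural mirror of part (1). Under that reading the reflection gives $S_\sigma=c$ a.s.

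I do not expect a genuine obstacle here. The only care needed is that the steps may have infinite variance, since only an upper bound is assumed. This is why I avoid CLT or second-moment arguments and use only the one-sided bound on the stopped martingale together with integer-valuedness.
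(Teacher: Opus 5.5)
Your proof is correct, but it proves $\mathbb{P}(\tau<\infty)=1$ by a different mechanism than the paper.

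\textbf{The paper's route.} It uses the exponential martingale $e^{\theta(S_n-a)}f(\theta)^{-n}$, where $f(\theta)=\mathbb{E}[e^{\theta\omega_1}]$ and $\theta>0$. The bound $\omega_n\le M$ makes $f(\theta)$ finite and the stopped process bounded. Strict Jensen gives $f(\theta)>1$, which is where non-degeneracy enters, so the stopped process tends to $0$ on $\{\tau=\infty\}$. Optional stopping then gives $\mathbb{E}\bigl[e^{\theta(S_\tau-a)}f(\theta)^{-\tau}\mathbf{1}_{\{\tau<\infty\}}\bigr]=1$, and letting $\theta\to0^{+}$ with dominated convergence yields the claim.

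\textbf{Your route.} You stop the linear martingale $S_n$ itself. The same overshoot bound makes $S_{n\wedge\tau}$ bounded above, hence a.s.\ convergent. (Your bound $S_{n\wedge\tau}\le b+M$ tacitly needs $M\ge-1$; this holds since $\mathbb{E}[\omega_1]=0$ forces $M\ge0$.) Non-degeneracy then enters through the second Borel--Cantelli lemma together with integrality.

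\textbf{Comparison.} Your argument is softer and shorter: it needs only first moments, no moment generating function, and no limit in $\theta$. The paper's argument gives more than finiteness. In the skip-free case $\omega_n\le1$, where $S_\tau=b$, its identity becomes $\mathbb{E}[f(\theta)^{-\tau}]=e^{-\theta(b-a)}$, the generating function of $\tau$. Its finiteness part also never uses integrality. Yours adapts easily to real-valued steps anyway: replace ``eventually $0$'' by ``$\omega_n\to0$'' and use $\mathbb{P}(|\omega_1|>\eta)>0$ for some $\eta>0$.

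The exact-hit argument and the reflection for part (2) are the same as in the paper. Your reading of the part (2) hypothesis as $\omega_n\ge-1$ is the right one. As printed, $\omega_n\ge1$ is incompatible with $\mathbb{E}[\omega_1]=0$, so the claim is vacuous. The condition $\omega_n\ge-1$ is what the reflection actually proves. It is also what the later application needs, since interior barycentric coordinates move by $+3/(4d)$ or $-1/(4d)$.
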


\begin{proof}
\begin{enumerate}
    \item First of all, let $\cF_0:=\{\emptyset,\Omega\}$, $\cF_n:=\sigma(\omega_1,\ldots,\omega_n)$, $n=1,2,\ldots$ and consider the filtration $\mathbb{F}:=\{\mathcal{F}_n\}^{\infty}_{n=1}$ on $\Omega$. Fix a positive real number $\theta$ and set
	\begin{equation*}
		f(\theta):=\mathbb{E}[e^{\theta\omega_1}].
	\end{equation*}
	Then, it is easy to verify that the stochastic process $\{M_n\}^{\infty}_{n=0}$ given by
	\begin{equation*}
		M_n:=\frac{e^{\theta S_n}}{(f(\theta))^n}\cdot e^{-\theta a},\quad n=0,1,2\ldots
	\end{equation*}
	is a $\mathbb{F}$-adapted martingale. Obviously, $\tau$ is a $\mathbb{F}$-stopping time. Therefore, from the optional stopping theorem we deduce that the stopped process $\{M_{\tau\wedge n}\}^{\infty}_{n=0}$ is also a $\mathbb{F}$-adapted martingale. Let us note that for $\mathbb{P}$-almost every $x\in\Omega$ with $\tau(x)<\infty$, we have $\tau(x)>0$ and $S_{\tau(x)-1}(x)\leq b$, therefore $S_{\tau(x)}(x)\leq b+M$. Thus, we deduce
	\begin{equation*}
		0<M_{\tau\wedge n}\leq\frac{e^{\theta\max\{b,b+M\}}}{(f(\theta))^n}\cdot e^{-\theta a}\quad\mathbb{P}\text{-a.s.,}
	\end{equation*}
    for all $n=0,1,2,\ldots$. In particular, $\{M_{\tau\wedge n}\}^{\infty}_{n=0}$ is uniformly integrable. Therefore, there is a random variable $M$, such that $M_{\tau\wedge n}\rightarrow M$ pointwise $\mathbb{P}$-a.s.~and in $L^1(\Omega)$ as $n\rightarrow\infty$. 
	
	Notice that by Jensen's inequality we have
	\begin{equation*}
		f(\theta)>e^{\mathbb{E}[\theta \omega_1]}=1,
	\end{equation*}
	the strict inequality being due to the fact that $\omega_1$ is not $\mathbb{P}$-a.s.~constant. Thus, for all $x\in\Omega$ with $\tau(x)=\infty$, since
	\begin{equation*}
		0\leq M_{\tau(x)\wedge n}(x)=\frac{e^{\theta S_{n}(x)}}{(f(\theta))^n}\cdot e^{-\theta a}\leq\frac{e^{\theta b}}{(f(\theta))^n}\cdot e^{-\theta a},\quad\forall n=0,1,2,\ldots,
	\end{equation*}
	we deduce
	\begin{equation*}
		\lim_{n\rightarrow\infty}M_{\tau(x)\wedge n}(x)=0.
	\end{equation*}
	It follows that $M(x)=0$ for $\mathbb{P}$-almost every $x\in\Omega$ with $\tau(x)=\infty$. Observe also that
	\begin{equation*}
		M(x)=\frac{e^{\theta S_{\tau(x)}}}{(f(\theta))^{\tau(x)}}\cdot e^{-\theta a}
	\end{equation*}
	(because $\tau(x)\wedge n=\tau(x)$, for all $n=\tau(x),\tau(x)+1,\ldots$), for $\mathbb{P}$-almost every $x\in\Omega$ with $\tau(x)<\infty$. Therefore, we obtain
	\begin{equation*}
		\mathbb{E}[M]=\mathbb{E}\left[\frac{e^{\theta S_{\tau}}}{(f(\theta))^{\tau}}\cdot e^{-\theta a}\mathbf{1}_{\{\tau<\infty\}}\right].
	\end{equation*}
	Since also $\mathbb{E}[M]=\mathbb{E}[M_0]=1$, we deduce
	\begin{equation*}
		\mathbb{E}\left[\frac{e^{\theta (S_{\tau}-a)}}{(f(\theta))^{\tau}}\mathbf{1}_{\{\tau<\infty\}}\right]=1.
	\end{equation*}
	Since $\theta>0$ was arbitrary, letting $\theta\rightarrow0^{+}$ and using the Dominated Convergence Theorem we obtain $\mathbb{E}[\mathbf{1}_{\{\tau<\infty\}}]=1$, that is $\tau<\infty$ $\mathbb{P}$-a.s.

    If $\omega_n\leq 1$ $\mathbb{P}$-a.s., for every $n=1,2,\ldots$, then for $\mathbb{P}$-almost every $x\in\Omega$ with $\tau(x)<\infty$ we have $S_{\tau(x)-1}(x)\leq b-1$, therefore
	\begin{equation*}
		b\leq S_{\tau(x)}(x)\leq b-1+1=b,
	\end{equation*}
	thus $S_{\tau(x)}(x)=b$.

    \item This follows immediately from the first part after observing that $-a<-c$ and
    \begin{equation*}
		\sigma=\inf\{n\geq 1:~-S_n\geq-c\}.
	\end{equation*}
\end{enumerate}
\end{proof}

\subsubsection{Probability of reaching each vertex}

Our random walk will begin on the center of the $3$-simplex and will reach one of the four vertices almost surely in finite time. Will one of the vertices receive unexpected preference and be reached with higher probability than the others? Remarkably, the following lemma shows that this will not be the case, provided that our stochastic process is a martingale.

\begin{lem}
\label{lem:probability_reach_each_vertex}
Let $(\Omega,\mathcal{F},\mathbb{F},\mathbb{P})$ be a filtered probability space. Let $\{X_n\}^{\infty}_{n=0}$ be a $\mathbb{R}^3$-valued, uniformly bounded, $\mathbb{F}$-adapted martingale. Let $a_0\in\R^3$. Assume that $X_0=a_0$ $\mathbb{P}$-a.s.~and that the limit function $X_{\infty}$ of $\{X_n\}^{\infty}_{n=0}$ satisfies
\begin{equation*}
	X_{\infty}\in\{a_1,a_2,a_3,a_4\}\quad\mathbb{P}\text{-a.s.},
\end{equation*}
where $a_1,a_2,a_3,a_4$ are affinely independent points in $\R^3$ with $\frac{1}{4}(a_1+a_2+a_3+a_4)=a_0$. Then, we have $\mathbb{P}(X_{\infty}=a_i)=\frac{1}{4}$, for all $i=1,2,3,4$.
\end{lem}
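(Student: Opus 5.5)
The plan is to combine the martingale property of $\{X_n\}$ with the uniqueness of barycentric coordinates. Set $p_i:=\mathbb{P}(X_{\infty}=a_i)$ for $i=1,2,3,4$.

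The first step is to compute $\mathbb{E}[X_\infty]$. Since $\{X_n\}$ is uniformly bounded, it is bounded in $L^2$, and hence uniformly integrable. By the martingale convergence theorem, $X_n\to X_\infty$ both $\mathbb{P}$-a.s. and in $L^1(\Omega)$. Passing to the limit in $\mathbb{E}[X_n]=\mathbb{E}[X_0]=a_0$ gives $\mathbb{E}[X_\infty]=a_0$.

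The second step evaluates the same expectation directly. Because $X_\infty$ takes $\mathbb{P}$-a.s. only the values $a_1,\ldots,a_4$, we have
\begin{equation*}
    \mathbb{E}[X_\infty]=\sum_{i=1}^{4}p_ia_i,\qquad \sum_{i=1}^{4}p_i=1.
\end{equation*}

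The third step compares the two expressions. By hypothesis, $a_0=\sum_{i=1}^4\frac14 a_i$. Subtracting, the coefficients $\lambda_i:=p_i-\frac14$ satisfy
\begin{equation*}
    \sum_{i=1}^4\lambda_i=0\quad\text{and}\quad\sum_{i=1}^4\lambda_ia_i=0.
\end{equation*}
Affine independence of $a_1,a_2,a_3,a_4$ then forces $\lambda_i=0$, that is, $p_i=\frac14$ for every $i$.

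There is no real obstacle here. The only point requiring care is justifying the interchange of limit and expectation, which uniform boundedness handles through dominated convergence or uniform integrability. One should also note that the limit $X_\infty$ exists a.s. by the martingale convergence theorem, as the statement implicitly assumes.
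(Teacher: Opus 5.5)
Your proof is correct and follows the paper's argument: the martingale property gives $\mathbb{E}[X_\infty]=\mathbb{E}[X_0]=a_0$, and comparing with $\sum_{i=1}^4 p_ia_i$ via affine independence forces $p_i=\frac14$. Your use of uniform integrability to justify $\mathbb{E}[X_\infty]=\mathbb{E}[X_0]$ fills in a step the paper leaves implicit.
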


\begin{proof}
Set
\begin{equation*}
	p_i:=\mathbb{P}(X_{\infty}=a_i),\quad i=1,2,3,4.
\end{equation*}
By assumption, $p_1+p_2+p_3+p_4=1$. Moreover, by the martingale property we have
\begin{equation*}
	\mathbb{E}[X_{\infty}]=\mathbb{E}[X_0]=a_0,
\end{equation*}
that is
\begin{equation*}
	\sum_{i=1}^{4}p_ia_i=a_0=\sum_{i=1}^{4}\frac{1}{4}a_i.
\end{equation*}
Since $\displaystyle\sum_{i=1}^{4}p_ia_i$, $\displaystyle\sum_{i=1}^{4}\frac{1}{4}a_i$ are both convex combinations of the affinely independent points $a_1,a_2,a_3,a_4$, we deduce $p_i=\frac{1}{4}$, for all $i=1,2,3,4$.
\bigskip
\end{proof}

\subsection{A ``small step'' \texorpdfstring{$4$}{4}-adic random walk}

Now we come to describing our ``small step" random walk. For reasons of  habit in analysis, we use dyadic intervals to encode trajectories of sign tosses. We fix a dyadic interval $I_0$, such that $-\log_{2}\ell(I_0)$ is odd. The underlying probability space is $I_0$ with the Borel $\sigma$-algebra and normalized Lebesgue measure. We denote by $\cD_{\mathrm{o}}(I_0)$ all dyadic subintervals $I$ of $I_0$ such that $-\log_{2}\ell(I)$ is also odd. We recall the $L^{\infty}$-normalized Haar functions
\begin{equation*}
    \varepsilon_{I}:=\mathbf{1}_{I_+}-\mathbf{1}_{I_-}.
\end{equation*}
For each interval $I$, we denote by $\mathrm{ch}^k(I)$ the family of all dyadic subintervals $J$ of $I$ with $\ell(J)=2^{-k}\ell(I)$.

The filtration we will be considering will be $\mathbb{F}=\{\cF_n\}^{\infty}_{n=0}$ with $\cF_n$ being the $\sigma$-algebra generated by
\begin{equation*}
    \cD_{\mathrm{o},n}(I_0):=\{I\in\cD_{\mathrm{o}}(I_0):~\ell(I)=2^{-2n}\ell(I_0)\}.
\end{equation*}
We will define a $\mathbb{F}$-adapted martingale $\{\wt{a}_n\}_{n=0}^{\infty}$ whose terms take values in the barycentric subdivision $\cA$ defined in terms of barycentric coordinates in \eqref{eq:barycentric_subdivision}. For each $n=0,1,2,\ldots$, we will write
\begin{equation*}
    \wt{a}_n=\sum_{I\in\cD_{\mathrm{o},n}(I_0)}\wt{a}_{I},
\end{equation*}
where for each $I$, $\wt{a}_{I}$ is a $\cA$-valued function that is constant on $I$ and vanishes outside of it. Thus, in order to describe our random walk, we just need to describe the functions $\wt{a}_{I}$. For the sake of simplicity we will denote the constant value of $\wt{a}_{I}$ on $I$ also by $\wt{a}_{I}$. We will also be denoting by $\lambda_{i,I}$, $i=1,2,3,4$ the barycentric coordinates of $\wt{a}_{I}$.

The associated martingale differences $\wt{b}_n:=\wt{a}_{n}-\wt{a}_{n-1}$, $n=1,2,\ldots$ will be sums of the form
\begin{equation*}
    \wt{b}_n=\sum_{I\in\cD_{\mathrm{o},n-1}(I_0)}\wt{b}_{I},
\end{equation*}
where each $\wt{b}_{I}$ vanishes outside of $I$, has zero average on $I$ and is constant on each $J\in\mathrm{ch}^2(I)$. This will ensure that $\{\wt{a}_n\}^{\infty}_{n=0}$ is indeed a $\mathbb{F}$-martingale.

We describe our ``small step'' random walk on the barycentric subdivision $\cA$ inductively in the subsections below.

However, a few words are in order regarding the choice of the random generators. When jumping randomly from $I$ to one of the four grandchildren $J\in\mathrm{ch}^2(I)$, we naturally use one of the three random generators $\varepsilon_I$, $\varepsilon_{I_+}$ and $\varepsilon_{I_-}$. In particular, they allow us to move in three independent directions in the interior of the simplex (see next subsection). When we reach a face, we need to produce a two-dimensional random walk and it is tempting to use $\varepsilon_{I_+}$ and $\varepsilon_{I_-}$. However, by doing so, when applying the dyadic Hilbert transform $\mathcal{H}^{\mathrm{dy}}$ to the ``small step'' martingale, we would \emph{not} be able to estimate the corresponding terms. This is the reason why, after reaching a face, we will only use $\varepsilon_I$ as a random generator to produce a two-dimensional random walk. Similarly when reaching an edge, we only use $\varepsilon_I$. We refer to Section \ref{subs:one_block_damage} where those observations are put to good use.

\subsubsection{Inside the simplex}

The random walk begins with the constant function $\wt{a}_0\equiv a_0$, so $\wt{a}_{I_0}:=a_0$.

Assume now that for some $I\in\cD_{\mathrm{o}}(I_0)$ we have defined $\wt{a}_{I}\in\cA$ lying in the \emph{interior} of the $3$-simplex. Observe that
\begin{equation}
    \label{eq:barycentric_cooordinates_inside_simplex}
    \lambda_{i,I}\geq\frac{1}{4d},\quad i=1,2,3,4,\quad\sum_{i=1}^{4}\lambda_{i,I}=1.
\end{equation}
Observe that in particular
\begin{equation*}
    \lambda_{i,I} = 1 - \sum_{\substack{j=1\\j\neq i}}^{4}\lambda_{j,I}
    \leq 1- \frac{3}{4d},\quad\forall i=1,2,3,4.
\end{equation*}
We define
\begin{equation*}
    \wt{b}_{I}:=\varepsilon_{I}\frac{a_1+a_2-a_3-a_4}{4d}+\varepsilon_{I_{+}}\frac{a_2-a_1}{2d}+\varepsilon_{I_{-}}\frac{a_4-a_3}{2d}.
\end{equation*}
Let us check that this definition makes sense. One can write
\begin{equation*}
    \wt{b}_{I}=\left(\frac{\varepsilon_{I}}{4d}-\frac{\varepsilon_{I_{+}}}{2d}\right)a_1
    +\left(\frac{\varepsilon_{I}}{4d}+\frac{\varepsilon_{I_{+}}}{2d}\right)a_2
    +\left(-\frac{\varepsilon_{I}}{4d}-\frac{\varepsilon_{I_{-}}}{2d}\right)a_3
    +\left(-\frac{\varepsilon_{I}}{4d}+\frac{\varepsilon_{I_{-}}}{2d}\right)a_4.
\end{equation*}
That means, $\wt{b}_{I}$ is zero outside $I$ and
\begin{gather*}
    \wt{b}_{I}|_{I_{++}}=-\frac{1}{4d}a_1+\frac{3}{4d}a_2-\frac{1}{4d}a_3-\frac{1}{4d}a_4,\\
    \wt{b}_{I}|_{I_{+-}}=\frac{3}{4d}a_1-\frac{1}{4d}a_2-\frac{1}{4d}a_3-\frac{1}{4d}a_4,\\
    \wt{b}_{I}|_{I_{-+}}=-\frac{1}{4d}a_1-\frac{1}{4d}a_2-\frac{1}{4d}a_3+\frac{3}{4d}a_4,\\
    \wt{b}_{I}|_{I_{--}}=-\frac{1}{4d}a_1-\frac{1}{4d}a_2+\frac{3}{4d}a_3-\frac{1}{4d}a_4.
\end{gather*}
So, setting
\begin{equation*}
    \wt{a}_{J}:=\wt{a}_{I}+\wt{b}_{I}|_{J}
\end{equation*}
for each $J\in\mathrm{ch}^2(J)$, we deduce
\begin{gather*}
    \lambda_{i,I_{++}}=\lambda_{i,I}-\frac{1}{4d},~i=1,3,4,\quad
    \lambda_{2,I_{++}}=\lambda_{2,I}+\frac{3}{4d},\\
    \lambda_{i,I_{+-}}=\lambda_{i,I}-\frac{1}{4d},~i=2,3,4,\quad
    \lambda_{1,I_{+-}}=\lambda_{1,I}+\frac{3}{4d},\\
    \lambda_{i,I_{-+}}=\lambda_{i,I}-\frac{1}{4d},~i=1,2,3,\quad
    \lambda_{4,I_{-+}}=\lambda_{4,I}+\frac{3}{4d},\\
    \lambda_{i,I_{--}}=\lambda_{i,I}-\frac{1}{4d},~i=1,2,4,\quad
    \lambda_{3,I_{--}}=\lambda_{3,I}+\frac{3}{4d}.
\end{gather*}
In view of \eqref{eq:barycentric_cooordinates_inside_simplex}, our definition makes sense.

We repeat this step for each $J\in\mathrm{ch}^2(I)$ such that $\wt{a}_{J}$ still lies in the interior of the simplex. If it happens that $\wt{a}_{J}$ lies on some face of the simplex, then a different algorithm will be used in the sequel, which we describe below.

We call the intervals $J\in\cD_{\mathrm{o}}(I_0)$ for which the above algorithm gives $\wt{a}_J$ in the \emph{interior of a face} of the simplex \emph{stopping intervals of order 1}. The intervals yielding a point in the \emph{interior of an edge} are called \emph{stopping intervals of order 2}. Finally, the intervals reaching directly a vertex are termed just \emph{stopping intervals}. Observe that by applying Lemma~\ref{lem:finiteness_hitting_times} (with $\{\omega_n\}$ being the sequence of the successive increments of any of the four barycentric coordinates) we have that for a.e.~$x\in I_0$ there exists $I\in\cD_{\mathrm{o}}(I_0)$, with $x\in I$, such that $\wt{a}_{I}$ lies on a face of the $3$-simplex.

\subsubsection{Inside a face}

Assume now that $I$ is a stopping interval of order 1 such that $\wt{a}_{I}\in\mathrm{Conv}(a_i,a_j,a_k)$ for some $i,j,k\in\{1,2,3,4\}$ with $i<j<k$. Observe that
\begin{equation}
    \label{eq:barycentric_coordinates_inside_face}
    \lambda_{i,I},\lambda_{j,I},\lambda_{k,I}\geq\frac{1}{4d},\quad \lambda_{i,I}+\lambda_{j,I}+\lambda_{k,I}=1.
\end{equation}
Let $\hat{I}$ be the dyadic parent of $I$, that is the unique dyadic subinterval of $I_0$ containing $I$ with length $2\ell(I)$. If $I=(\hat{I})_{+}$, then we define
\begin{equation*}
    \wt{b}_{I}:=\frac{a_i-a_j}{4d}\varepsilon_{I}.
\end{equation*}
If $I=(\hat{I})_{-}$, then we define
\begin{equation*}
    \wt{b}_{I}:=\frac{a_i-a_k}{4d}\varepsilon_{I}.
\end{equation*}
We also define
\begin{equation*}
    \wt{a}_{J}:=\wt{a}_{I}+\wt{b}_{I}|_{J}\text{ for each }J\in\mathrm{ch}^2(I).
\end{equation*}
Concretely: if $I=(\hat{I})_{+}$, then
\begin{gather*}
    \lambda_{i,I_{++}}=\lambda_{i,I_{+-}}=\lambda_{i,I}+\frac{1}{4d},\quad \lambda_{i,I_{-+}}=\lambda_{i,I_{--}}=\lambda_{i,I}-\frac{1}{4d},\\
    \lambda_{j,I_{++}}=\lambda_{j,I_{+-}}=\lambda_{j,I}-\frac{1}{4d},\quad
    \lambda_{j,I_{-+}}=\lambda_{j,I_{--}}=\lambda_{j,I}+\frac{1}{4d},\\
    \lambda_{k,I_{++}}=\lambda_{k,I_{+-}}=
    \lambda_{k,I_{-+}}=\lambda_{k,I_{--}}=\lambda_{k,I},
\end{gather*}
whereas if $I=(\hat{I})_{-}$, then
\begin{gather*}
    \lambda_{i,I_{++}}=\lambda_{i,I_{+-}}=\lambda_{i,I}+\frac{1}{4d},\quad
    \lambda_{i,I_{-+}}=\lambda_{i,I_{--}}=\lambda_{i,I}-\frac{1}{4d},\\
    \lambda_{j,I_{++}}=\lambda_{j,I_{+-}}=
    \lambda_{j,I_{-+}}=\lambda_{j,I_{--}}=\lambda_{j,I},\\
    \lambda_{k,I_{++}}=\lambda_{k,I_{+-}}=\lambda_{k,I}-\frac{1}{4d},\quad
    \lambda_{k,I_{-+}}=\lambda_{k,I_{--}}=\lambda_{k,I}+\frac{1}{4d}.
\end{gather*}
This step is then repeated in each $J\in\mathrm{ch}^2(I)$ such that $\wt{a}_{J}$ still lies in the interior of the face. If $\wt{a}_{J}$ lies on the interior of an edge, then $J$ is a stopping interval of order 2 and one moves on to the third part of the algorithm. If $\wt{a}_{J}$ lies on a vertex, then $J$ is just a stopping interval.

Again, by an application of Lemma~\ref{lem:finiteness_hitting_times} we have that for a.e.~$x\in I_0$ there exists $I\in\cD_{\mathrm{o}}(I_0)$, with $x\in I$, such that $\wt{a}_{I}$ lies on an edge of the $3$-simplex.

\subsubsection{Inside an edge}
\label{subsub:small_step_edge}

Let $I$ be a stopping interval of order 2 with $\wt{a}_{I}\in\mathrm{Conv}(a_i,a_j)$ for some $i,j\in\{1,2,3,4\}$ with $i<j$. Observe that
\begin{equation*}
    \lambda_{i,I},\lambda_{j,I}\geq\frac{1}{4d},\quad \lambda_{i,I}+\lambda_{j,I}=1.
\end{equation*}
We define
\begin{equation*}
    \wt{b}_{I}:=\frac{a_j-a_i}{4d}\varepsilon_{I}
\end{equation*}
and
\begin{equation*}
    \wt{a}_{J}:=\wt{a}_{I}+\wt{b}_{I}|_{J},\quad J\in\mathrm{ch}^2(I).
\end{equation*}
This means
\begin{gather*}
    \lambda_{i,I_{++}}=\lambda_{i,I_{+-}}=\lambda_{i,I}-\frac{1}{4d},\quad
    \lambda_{i,I_{-+}}=\lambda_{i,I_{--}}=\lambda_{i,I}+\frac{1}{4d}\\
    \lambda_{j,I_{++}}=\lambda_{j,I_{+-}}=\lambda_{j,I}+\frac{1}{4d},\quad
    \lambda_{j,I_{-+}}=\lambda_{j,I_{--}}=\lambda_{j,I}-\frac{1}{4d}.
\end{gather*}
This step is repeated in each $J\in\mathrm{ch}^2(I)$ such that $\wt{a}_{J}$ still lies in the interior of the edge. If $\wt{a}_{J}$ lies on a vertex, then $J$ is a stopping interval.

A final application of Lemma~\ref{lem:finiteness_hitting_times} shows that for a.e.~$x\in I_0$ there exists $I\in\cD_{\mathrm{o}}(I_0)$, with $x\in I$, such that $\wt{a}_{I}$ lies on a vertex of the simplex.

\subsubsection{On a vertex}

If $I$ is a stopping interval, then we just define
\begin{equation*}
    \wt{b}_{J}:=0,\quad\wt{a}_{J}:=\wt{a}_{I},
\end{equation*}
for all $J\in\cD_{\mathrm{o}}(I)$ with $J\subseteq I$.

\subsubsection{Limit function}

As noted above, the stochastic process $\{\wt{a}_n\}^{\infty}_{n=0}$ we described is automatically a $\mathbb{F}$-martingale, thanks to the properties of the differences $\wt{b}_n$. We already observed above that it has a limit function $\wt{a}$, such that for almost every $x\in I_0$ there is $n\in\N$ with $\wt{a}_k(x)=\wt{a}(x)\in\{a_1,a_2,a_3,a_4\}$ for all $k=n,n+1,\ldots$. Finally, Lemma~\ref{lem:finiteness_hitting_times} yields $\mathrm{m}(\{\wt{a}=a_i\})=\frac{1}{4}\ell(I_0)$ for each $i=1,2,3,4$.

\subsection{Getting the ``damage''}
\label{subs:one_block_damage}

In this subsection we show that the random walk described above satisfies a certain estimate. The significance of it will become apparent in the next section.

We consider some dyadic interval $K$ and points
\begin{equation*}
    (x_{J},y_{J})\in\R^n\times\R^n,\quad J\in\{K,K_{\pm},K_{\pm\pm}\}
\end{equation*}
satisfying the (martingale) relations
\begin{equation*}
    x_{J}=\frac{1}{2}(x_{J_{+}}+x_{J_{-}}),\quad y_{J}=\frac{1}{2}(y_{J_{+}}+y_{J_{-}}),\quad J\in\{K,K_{\pm}\}.
\end{equation*}
Let us consider the convex set $C$ generated by the points $(x_{J},y_{J})$, $J\in\{K_{\pm\pm}\}$. Then, there is a unique affine map $A:\mathscr{K}\to C$ from the $3$-simplex $\mathscr{K}$ onto $C$ such that 
\begin{gather*}
    A(a_1)=(x_{K_{+-}},y_{K_{+-}}),\quad A(a_2)=(x_{K_{++}},y_{K_{++}}),\\
    A(a_3)=(x_{K_{--}},y_{K_{--}}),\quad A(a_4)=(x_{K_{-+}},y_{K_{-+}}).
\end{gather*}
Thus, we can consider the $\mathbb{F}$-adapted martingale $\{\wt{F}_n=A\circ\wt{a}_n\}_{n=0}^{\infty}$ with limit function $\wt{F}=A\circ\wt{a}$. Write $\wt{F}=(\wt{f},\wt{g})$. We show here that
\begin{align}
    \label{eq:damage_on_each_simplex}
    (\mathcal{H}^{\mathrm{dy}}\wt f,\wt{g})_{L^2(I_0)}&=\frac{\mathbb{E}[\tau]}{d^2}|I_0|\bigg(\frac{1}{2}\langle\Delta_{K}x,\Delta_{K_{+}}y-\Delta_{K_{-}}y\rangle
    +\frac{1}{2}\langle\Delta_{K_{+}}x-\Delta_{K_{-}}x,\Delta_{K}y\rangle\\
    \nonumber&+\frac{1}{4}(\langle\Delta_{K_{+}}x,\Delta_{K_{-}}y\rangle-\langle\Delta_{K_{-}}x,\Delta_{K_{+}}y\rangle)\bigg),
\end{align}
where we denote $\Delta_{J}z=z_{J_{+}}-z_{J_{-}}$ and we consider the $\mathbb{F}$-stopping time $\tau$ defined by
\begin{equation*}
    \tau=\inf\{n\geq0:~\wt{a}_n\text{ lies on the boundary of }\mathscr{K}\}.
\end{equation*}
 We emphasize that $L^2(I_0)$ refers to non-normalized Lebesgue measure on $I_0$. Essentially, \eqref{eq:damage_on_each_simplex} is a consequence of the \emph{discrete It{\^o} isometry}. Here, we present an explicit argument, reproving a form of the latter in the present special case.

To see \eqref{eq:damage_on_each_simplex}, we begin by writing out the definition of $\mathcal{H}^{\mathrm{dy}}$ and expanding the left hand side in \eqref{eq:damage_on_each_simplex}, getting
\begin{align}
    \label{eq:write_out_damage_on_each_simplex}
    (\mathcal{H}^{\mathrm{dy}}\wt f,\wt{g})_{L^2(I_0)}&=\sum_{I\in\cD_{\mathrm{o}}(I_0)}\frac{1}{2}\langle\Delta_{I}\wt{f},\Delta_{I_{+}}\wt{g}-\Delta_{I_{-}}\wt{g}\rangle|I|\\
    \nonumber&+\sum_{I\in\cD_{\mathrm{o}}(I_0)}\frac{1}{2}\langle\Delta_{I_{+}}\wt{f}-\Delta_{I_{-}}\wt{f},\Delta_{I}\wt{g}\rangle|I|\\
    \nonumber&+\sum_{I\in\cD_{\mathrm{o}}(I_0)}\frac{1}{4}(\langle\Delta_{I_{+}}\wt{f},\Delta_{I_{-}}\wt{g}\rangle-\langle\Delta_{I_{-}}\wt{f},\Delta_{I_{+}}\wt{g}\rangle)|I|,
\end{align}
where we denote $\Delta_{J}h=\langle h\rangle_{J_{+}}-\langle h\rangle_{J_{-}}$. Let us denote by $\mathcal{S}$ the family of all $I\in\cD_{\mathrm{o}}(I_0)$ such that $\wt{a}_{I}$ lies in the interior of the $3$-simplex $\mathscr{K}$. Then, by construction of the martingale $\{\wt{a}_n\}$ we have $\Delta_{I_{\pm}}\wt{f}=\Delta_{I_{\pm}}\wt{g}=0$, for every $I\in\cD_{\mathrm{o}}(I_0)\setminus\mathcal{S}$. Moreover, for every $I\in\mathcal{S}$ we have, again by construction of $\{\wt{a}_n\}$,
\begin{equation*}
    \Delta_{I}\wt{f}=\frac{1}{d}\Delta_{K}x,~\Delta_{I_{\pm}}\wt{f}=\frac{1}{d}\Delta_{K_{\pm}}x\quad\text{and}\quad\Delta_{I}\wt{g}=\frac{1}{d}\Delta_{K}y,~\Delta_{I_{\pm}}\wt{g}=\frac{1}{d}\Delta_{K_{\pm}}y.
\end{equation*}
Thus, \eqref{eq:write_out_damage_on_each_simplex} reduces to
\begin{align}
    \label{eq:damage_on_each_simplex_intervals}
    (\mathcal{H}^{\mathrm{dy}}\wt f,\wt{g})_{L^2(I_0)}&=\frac{1}{d^2}\bigg(\frac{1}{2}\langle\Delta_{K}x,\Delta_{K_{+}}y-\Delta_{K_{-}}y\rangle
    +\frac{1}{2}\langle\Delta_{K_{+}}x-\Delta_{K_{-}}x,\Delta_{K}y\rangle\\
    \
    \nonumber&+\frac{1}{4}(\langle\Delta_{K_{+}}x,\Delta_{K_{-}}y\rangle-\langle\Delta_{K_{-}}x,\Delta_{K_{+}}y\rangle)\bigg)\sum_{I\in\mathcal{S}}|I|.
\end{align}
Now observe that
\begin{equation*}
    \sum_{I\in\mathcal{S}}|I|=|I_0|\cdot\mathbb{E}\bigg[\sum_{I\in\mathcal{S}}\mathbf{1}_{I}\bigg]
\end{equation*}
and that obviously $\sum_{I\in\mathcal{S}}\mathbf{1}_{I}=\tau$, proving \eqref{eq:damage_on_each_simplex}.

It is essential in \eqref{eq:damage_on_each_simplex} that the factor one obtains in front of the right hand side is independent of $x,y,J$. It is equally essential to check at this point that $\mathbb{E}[\tau]\geq cd^2$ for some absolute constant $c>0$. To see this, we consider the $\R^4$-valued stochastic process $X=\{X_{n}\}_{n=0}^{\infty}$ collecting the barycentric coordinates of $\{\wt{a}_n\}^{\infty}_{n=0}$. Note that $X$ is still a $\mathbb{F}$-adapted martingale. Then, it holds
\begin{equation*}
    \tau=\inf\{n\geq0:~X_n\text{ has a zero coordinate}\}.
\end{equation*}
Observing that $|X_n-X_{n-1}|^2\leq\frac{3}{2d^2}$, we obtain
\begin{equation*}
    \mathbb{E}[|X_n-X_{n-1}|^2|\cF_n]\leq\frac{3}{2d^2}.
\end{equation*}
Thus, the process $\bigg\{|X_n-X_0|^2-n\frac{3}{2d^2}\bigg\}$ is a $\mathbb{F}$-supermartingale. An application of the optional stopping theorem gives
\begin{equation*}
    \mathbb{E}\bigg[|X_{\tau}-X_0|^2-\tau\frac{3}{2d^2}\bigg]\leq \mathbb{E}\bigg[|X_{0}-X_0|^2-0\cdot d\sqrt{\frac{3}{2}}\bigg]=0,
\end{equation*}
in other words $\mathbb{E}[\tau]\geq\frac{2d^2}{3}\mathbb{E}[|X_{\tau}-X_0|^2]$. It is clear that $|X_{\tau}-X_{0}|\geq\frac{1}{2}$, for at each point $x$ with $\tau(x)<\infty$, $X_{\tau}(x)$ has a zero coordinate, and $X_{0}\equiv\bigg(\frac{1}{4},\frac{1}{4},\frac{1}{4},\frac{1}{4}\bigg)$. So $\mathbb{E}[\tau]\geq\frac{d^2}{6}$.

Let us note that a similar argument using a lower bound for $|X_n-X_{n-1}|^2$ and a submartingale shows that $\mathbb{E}[\tau]\leq 4d^2\mathbb{E}[|X_{\tau}-X_0|^2]$. Since $|X_{\tau}-X_{0}|\leq 2$, we also get $\mathbb{E}[\tau]\leq 8d^2$.

\section{``Small step'' examples}
\label{sec:small_step_examples}

Let us recall the bounded function $F=(W,V,\mathbf{f},\mathbf{g})$ on $I^{0}$ taking values in the space $\mathscr{X}:=\R^{2\times 2}\times\R^{2\times 2}\times\R^2\times\R^2$ from Section~\ref{sec:large_step_examples}. Using the ``small step'' random walks from Section~\ref{sec:small_step_random_walks_tetrahedron} as building blocks, we will construct a new bounded $\mathscr{X}$-valued function $\wt{F}=(\wt{W},\wt{V},\wt{\mathbf{f}},\wt{\mathbf{g}})$ on $I^{0}$, such that there is a measure preserving map $T:I^0\to I^0$ with $\wt{F}=F\circ T$ almost everywhere, $(\mathcal{H}^{\mathrm{dy}}\wt{\mathbf{f}},\wt{\mathbf{g}})_{L^2}=c(\mathcal{H}^{\mathrm{dy}}\mathbf{f},\mathbf{g})_{L^2}$ for some absolute positive constant $c$, and in addition $[\wt{W}]^{\mathrm{dy}}_{\mathbf{A}_2}\sim Q$ and $S^{\mathrm{dy}}_{\wt{W}},S^{\mathrm{dy}}_{\wt{V}}\leq 1+\varepsilon$.

\subsection{The iterative construction}

Instead of directly defining the function $\wt{F}$, we will describe explicitly all averages $\{\langle\wt{F}\rangle_{I}\}_{I\in\cD_{\mathrm{o}}}$. Observe that the averages over even dyadic intervals are then uniquely determined by the usual martingale relations.

Before laying out the details, we mention a couple of important aspects of the construction:
\begin{itemize}
    \item To each $x\in[0,1)$ there will correspond a trajectory in the space $\mathscr{X}$. However, instead of directly thinking of points, we will be thinking of the sequence of the dyadic intervals containing them.
    
    \item Each trajectory will be defined recursively. Each step of this recursion will be taking place inside an odd dyadic interval, that is an interval in $\cD_{\mathrm{o}}$.

    \item As explained in Section~\ref{sec:large_step_examples}, to each interval $K\in\mathscr{F}$ there corresponds a (possibly degenerate) tetrahedron $\mathscr{C}(K)$, such that the old average $\langle F\rangle_{K}$ occupies the center of that tetrahedron. Almost every recursive step will correspond to a single small step random walk of the type described in Section~\ref{sec:small_step_random_walks_tetrahedron} on the tetrahedron corresponding to some interval in $\mathscr{F}$
    
    For each $K\in\mathscr{F}$ we will be denoting by $\mathscr{J}(K)$ the set of all dyadic intervals, in which a recursive step takes place on the tetrahedron corresponding to $K$. This amounts to the points/trajectories visiting $\langle F\rangle_{K}$ in the course of the construction.
\end{itemize}

\medskip

\textbf{Initialization.} We initialize the construction by defining
\begin{equation*}
    \langle\wt{F}\rangle_{I^{0}_{+}}:=\langle F\rangle_{I^0}\quad\text{and}\quad\langle\wt{F}\rangle_{I^{0}_{-}}:=\langle F\rangle_{I^0}.
\end{equation*}
It might seem surprising that for this one step at the very beginning our small step random walk does not move at all. However, as already mentioned above, each recursive step will take place inside an odd dyadic interval. The very first odd dyadic intervals are $I^{0}_{+}$ and $I^{0}_{-}$. Thus, it is reasonable to put the new averages for those at the very first old average.
\medskip

\textbf{1st step.} The first step of the iteration is unique to the dyadic children of $I^{0}$, and differs from what we call below regular iterative step.

Let $J\in\{I^{0}_{+},I^{0}_{-}\}$. Then, we perform the one dimensional small step 4-adic random walk in $J$ on the segment with endpoints $\langle F\rangle_{I^{0}_{\pm}}$, just as in \ref{subsub:small_step_edge}, starting from the midpoint $\langle F\rangle_{I^{0}}$ of this segment. In this way, further averages of $\wt{F}$ are obtained, and we stop ``temporarily'' once we reach one of the two vertices of the segment. As noted in Section~\ref{sec:large_step_examples}, this segment is by construction non-degenerate, i.e.~$\langle F\rangle_{I^{0}_{+}}\neq\langle F\rangle_{I^{0}_{-}}$. For each $K\in\{I^{0}_{+},I^{0}_{-}\}$, we let $\mathscr{J}(J,K)$ be the family of the stopping intervals $I\in\cD_{\mathrm{o}}(J)$ with $\langle\wt{F}\rangle_{I}=\langle F\rangle_{K}$, which amounts to the pieces of trajectories of points in $J$ reaching the old average $\langle F\rangle_{K}$.

Finally, for each $K\in\{I^{0}_{+},I^{0}_{-}\}$ we set $\mathscr{J}(K):=\mathscr{J}(I^{0}_{+},K)\cup\mathscr{J}(I^{0}_{-},K)$. This corresponds to the pieces of trajectories of all points in $I^{0}$ that have reached the old average $\langle F\rangle_{K}$ after this special first step is completed. Observe that the intervals in $\mathscr{J}(K)$ are pairwise disjoint and that
\begin{equation*}
    \sum_{I\in\mathscr{J}(K)}|I|=\sum_{I\in\mathscr{J}(I^{0}_{+},K)}|I|+\sum_{I\in\mathscr{J}(I^{0}_{-},K)}|I|=\frac{|I^0_{+}|}{2}+\frac{|I^0_{-}|}{2}=|K|,
\end{equation*}
where in the second $=$ we used that we stop with equal probability on each endpoint of the segment, as explained in Section~\ref{sec:small_step_random_walks_tetrahedron}. Thus, the new small step random walk reaches $\langle F\rangle_{K}$ with the same probability as the old large step one.

\medskip

\textbf{Regular iterative step.} Now we come to the description of the regular iterative step, which will be applied from now on.

Assume now that for some $K\in\mathscr{F}$ we have defined the family $\mathscr{J}(K)$ of pairwise disjoint odd dyadic subintervals of $I^0$. We recall that in intuitive terms this is just the family of pieces of trajectories having reached the old average $\langle F\rangle_{K}$ after all the steps that have already been completed. In symbols, this reads as $\langle\wt{F}\rangle_{I}=\langle F\rangle_{K}$ for all $I\in\mathscr{J}(K)$. The aforementioned old average is the center of mass of the tetrahedron $\mathscr{C}(K)$. Intuitively speaking, the small step random walk has entered a new tetrahedron. We assume also that the new small step random walk has reached $\langle F\rangle_{K}$ with the same probability as the old large step one, i.e.~$\sum_{I\in\mathscr{J}(K)}|I|=|K|$.

Likewise, we assume that in the steps that have been so far completed we have already defined all the averages $\langle\wt{F}\rangle_{J}$ over all odd dyadic intervals $J$ containing (not necessarily strictly) some interval $I\in\mathscr{J}(K)$.

Then, in each $I\in\mathscr{J}(K)$ we perform the small step 4-adic random walk on the tetrahedron $\mathscr{C}(K)$ corresponding to $K$, starting from the center of mass $\langle F\rangle_{K}$. In this way, further averages of $\wt{F}$ are obtained, and we stop ``temporarily'' once we reach one of the four vertices of the tetrahedron. Of course, since the tetrahedron $\mathscr{C}(K)$ might be degenerate, one actually considers such a random walk on some non-degenerate tetrahedron, say in $\R^3$, which through an affine map is mapped onto a random walk on $\mathscr{C}(K)$, as described in Subsection~\ref{subs:one_block_damage}. For each $L\in\mathscr{F}$ with $L\subsetneq K$ (if any) we denote by $\mathscr{J}(I,L)$ the family of the stopping intervals $M\in\cD_{\mathrm{o}}(I)$ such that $\langle\wt{F}\rangle_{M}=\langle F\rangle_{L}$.

Finally, we set $\mathscr{J}(L):=\bigcup_{I\in\mathscr{J}(K)}\mathscr{J}(I,L)$. This is just the family of all pieces of trajectories having reached the old average $\langle F\rangle_{L}$ after all the previous steps and the current new step have been completed. Observe that the intervals in $\mathscr{J}(L)$ are odd, pairwise disjoint and satisfy
\begin{equation*}
    \sum_{M\in\mathscr{J}(L)}|M|=\sum_{I\in\mathscr{J}(K)}\sum_{M\in\mathscr{J}(I,L)}|M|=\sum_{I\in\mathscr{J}(K)}\frac{|I|}{4}=\frac{|K|}{4}=|L|,
\end{equation*}
where in the second $=$ we used that we stop with equal probability on each endpoint of the tetrahedron, as explained in Section~\ref{sec:small_step_random_walks_tetrahedron}. Thus, the new small step random walk reaches $\langle F\rangle_{L}$ with the same probability as the old large step one.

For each each $I\in\mathscr{J}(K)$ and $L\in\mathscr{E}$ with $L\subseteq K$ we denote by $\mathscr{I}(I,L)$ the family of the stopping intervals $M\in\cD_{\mathrm{o}}(I)$ such that $\langle\wt{F}\rangle_{M}=\langle F\rangle_{L}$. We set $\mathscr{I}(L):=\bigcup_{I\in\mathscr{J}(K)}\mathscr{I}(I,L)$. As before, the intervals in $\mathscr{J}(L)$ are odd, pairwise disjoint and satisfy $\sum_{M\in\mathscr{J}(L)}|M|=|M|$. We let $\wt{F}$ being identically equal to $\langle\wt{F}\rangle_{M}$ on each such interval $M$. This completes the inductive description.

\medskip

For any intervals $I,J$, denote by $\psi_{I,J}:I\to J$ the unique orientation-preserving affine map mapping $I$ onto $J$. Observe that the family $\bigcup_{L\in\mathscr{E}}\mathscr{J}(L)$ forms a partition of $I^{0}$, up to a set of zero measure. Thus, one can consider a map $T:I^{0}\to I^{0}$ which for almost every $x\in I^{0}$ is given by
\begin{equation*}
    T(x):=\psi_{M,L}(x)\quad\text{ if }x\in M\text{ for some }M\in\mathscr{I}(L)\text{ and }L\in\mathscr{E}.
\end{equation*}
Since $\sum_{M\in\mathscr{J}(L)}|M|=|L|$, for all $L\in\mathscr{E}$, we conclude that $T$ is measure-preserving. Finally, in view of \eqref{eq:F_decomposition}, since
\begin{equation}
    \label{eq:F_tilde_decomposition}
    \wt{F}=\sum_{L\in\mathscr{E}}\sum_{M\in\mathscr{I}(L)}\langle\wt{F}\rangle_{M}\mathbf{1}_{M}=\sum_{L\in\mathscr{E}}\langle F\rangle_{L}\sum_{M\in\mathscr{I}(L)}\mathbf{1}_{M}
\end{equation}
we deduce $\wt{F}=F\circ T$ almost everywhere.

Let us observe that in particular $\wt{V}=\wt{W}^{-1}$ a.e.~and
\begin{equation*}
    |\mathbf{f}\|_{L^2(W)}=\|\wt{\mathbf{f}}\|_{L^2(\wt{W})},\quad |\mathbf{g}\|_{L^2(W^{-1})}\|\wt{\mathbf{g}}\|_{L^2(\wt{W}^{-1})}.
\end{equation*}

\subsection{Preservation of the damage}

Here we estimate $(\mathcal{H}^{\text{dy}}\wt{\mathbf{f}},\wt{\mathbf{g}})_{L^2(I^0)}$. We begin by writing
\begin{align}
    \label{eq:write_out_total_damage}
    (\mathcal{H}^{\mathrm{dy}}\wt{\mathbf{f}},\wt{\mathbf{g}})_{L^2(I^0)}&=\sum_{I\in\cD_{\mathrm{o}}}\frac{1}{2}\langle\Delta_{I}\wt{\mathbf{f}},\Delta_{I_{+}}\wt{\mathbf{g}}-\Delta_{I_{-}}\wt{\mathbf{g}}\rangle|I|\\
    \nonumber&+\sum_{I\in\cD_{\mathrm{o}}}\frac{1}{2}\langle\Delta_{I_{+}}\wt{\mathbf{f}}-\Delta_{I_{-}}\wt{\mathbf{f}},\Delta_{I}\wt{\mathbf{g}}\rangle|I|\\
    \nonumber&+\sum_{I\in\cD_{\mathrm{o}}}\frac{1}{4}(\langle\Delta_{I_{+}}\wt{\mathbf{f}},\Delta_{I_{-}}\wt{\mathbf{g}}\rangle-\langle\Delta_{I_{-}}\wt{\mathbf{f}},\Delta_{I_{+}}\wt{\mathbf{g}}\rangle)|I|.
\end{align}
Observe that for every odd dyadic interval $I$ one of the following holds:
\begin{itemize}
    \item[(a)] There is no $K\in\mathscr{F}$ such that $I\subseteq J$ for some $J\in\mathscr{I}(K)$. We write $I\in\mathscr{A}$ in this case.

    \item[(b)] There is some $K\in\mathscr{F}$ such that $I\subseteq J$ for some $J\in\mathscr{I}(K)$. In this case we denote by $\mathscr{T}(I)$ the smallest such interval $K$ and by $\mathscr{S}(K)$ the corresponding interval $J$.
\end{itemize}
Thus, we can write $(\mathcal{H}^{\mathrm{dy}}\wt{\mathbf{f}},\wt{\mathbf{g}})_{L^2(I^0)}=A_1+A_2$, where
\begin{align*}
    A_1&:=\sum_{I\in\mathscr{A}}\frac{1}{2}\langle\Delta_{I}\wt{\mathbf{f}},\Delta_{I_{+}}\wt{\mathbf{g}}-\Delta_{I_{-}}\wt{\mathbf{g}}\rangle|I|\\
    \nonumber&+\sum_{I\in\mathscr{A}}\frac{1}{2}\langle\Delta_{I_{+}}\wt{\mathbf{f}}-\Delta_{I_{-}}\wt{\mathbf{f}},\Delta_{I}\wt{\mathbf{g}}\rangle|I|\\
    \nonumber&+\sum_{I\in\mathscr{A}}\frac{1}{4}(\langle\Delta_{I_{+}}\wt{\mathbf{f}},\Delta_{I_{-}}\wt{\mathbf{g}}\rangle-\langle\Delta_{I_{-}}\wt{\mathbf{f}},\Delta_{I_{+}}\wt{\mathbf{g}}\rangle)|I|
\end{align*}
and
\begin{align*}
    A_2&:=\sum_{K\in\mathscr{F}}\bigg[\sum_{\mathscr{T}(I)=K}\frac{1}{2}\langle\Delta_{I}\wt{\mathbf{f}},\Delta_{I_{+}}\wt{\mathbf{g}}-\Delta_{I_{-}}\wt{\mathbf{g}}\rangle|I|\\
    \nonumber&+\sum_{\mathscr{T}(I)=K}\frac{1}{2}\langle\Delta_{I_{+}}\wt{\mathbf{f}}-\Delta_{I_{-}}\wt{\mathbf{f}},\Delta_{I}\wt{\mathbf{g}}\rangle|I|\\
    \nonumber&+\sum_{\mathscr{T}(I)=K}\frac{1}{4}(\langle\Delta_{I_{+}}\wt{\mathbf{f}},\Delta_{I_{-}}\wt{\mathbf{g}}\rangle-\langle\Delta_{I_{-}}\wt{\mathbf{f}},\Delta_{I_{+}}\wt{\mathbf{g}}\rangle)|I|\bigg].
\end{align*}

For $A_1$, observe that if $I\in\mathscr{A}$, then $\langle\wt{F}\rangle_{I}$ lies still on the interior of the initial segment, thus by construction $\Delta_{I_{\pm}}\wt{F}=0$. It follows that $A_2=0$.

To estimate $A_2$, observe that $\mathscr{T}(I)=K$, then $I$ is one of the dyadic subintervals of $J$ that appear in the random walk in $J=\mathscr{S}(I)$ on the simplex $\mathscr{C}(K)$. Thus, by the computations in Subsection~\ref{subs:one_block_damage} we obtain
\begin{align*}
    A_2&=\sum_{K\in\mathscr{F}}\sum_{J\in\mathscr{I}(K)}\frac{c}{d^2}|J|\bigg(\frac{1}{2}\langle\Delta_{J}\mathbf{f},\Delta_{K_{+}}\mathbf{g}-\Delta_{K_{-}}\mathbf{g}\rangle
    +\frac{1}{2}\langle\Delta_{K_{+}}\mathbf{f}-\Delta_{K_{-}}\mathbf{f},\Delta_{K}\mathbf{g}\rangle\\
    &+\frac{1}{4}(\langle\Delta_{K_{+}}\mathbf{f},\Delta_{K_{-}}\mathbf{g}\rangle-\langle\Delta_{K_{-}}\mathbf{f},\Delta_{K_{+}}\mathbf{g}\rangle)\bigg),
\end{align*}
where the constant $c>0$ equals $\mathbb{E}[\tau]$ in the notation of Subsection~\ref{subs:one_block_damage}. Observe that this constant $c$ is actually universal, that is independent of both $J$ and $K$, because
\begin{itemize}
    \item dyadic intervals equipped with normalized Lebesgue measure and the corresponding dyadic filtration form isomorphic filitered probability spaces, and

    \item the random walk that underlies this construction is purely expressible in terms of barycentric coordinates and has nothing to do with the concrete tetrahedron one uses, as apparent when estimating $\mathbb{E}[\tau]$ in Subsection~\ref{subs:one_block_damage}.
\end{itemize}
Recalling that $\sum_{J\in\mathscr{I}(K)}\frac{c}{d^2}|J|=|K|$ and Subsection~\ref{subs:damage_large_step}, we obtain
\begin{equation*}
    (\mathcal{H}^{\text{dy}}\wt{\mathbf{f}},\wt{\mathbf{g}})_{L^2(I^0)}=\frac{c}{d^2}(\mathcal{H}^{\text{dy}}\mathbf{f},\mathbf{g})_{L^2(I^0)}.
\end{equation*}
We saw in Subsection~\ref{subs:one_block_damage} that $c\gtrsim d^2$, therefore
\begin{equation*}
    |(\mathcal{H}^{\text{dy}}\wt{\mathbf{f}},\wt{\mathbf{g}})_{L^2(I^0)}|\gtrsim Q^{3/2}|\mathbf{f}\|_{L^2(W)}\|\mathbf{g}\|_{L^2(W^{-1})}
    =Q^{3/2}\|\wt{\mathbf{f}}\|_{L^2(\wt{W})}\|\wt{\mathbf{g}}\|_{L^2(\wt{W}^{-1})}.
\end{equation*}

\subsection{Checking the matrix Muckenhoupt characteristic and the dyadic smoothness}
\label{subs:small_step_dyadic_smoothness}

We will first need the following elementary functional theoretical observation. The proof is standard, but we include it for the reader's convenience. Even though we need it only for matrices, it can be done without additional difficulties for operators on any Hilbert space. If $A$ is a bounded, self-adjoint, linear operator on a Hilbert space $H$, then we denote $m(A):=\min\sigma(A)$.

\begin{lem}
    \label{lem:near_convex_comparable}
    Let $n$ be a positive integer and let $A_1,\ldots,A_n$ be bounded, positive definite, linear operators on a Hilbert space $H$.
    \begin{enumerate}
        \item We have
        \begin{equation*}
            \inf\left\{m\left(\sum_{i=1}^{n}\lambda_iA_i\right):~\lambda_i\in[0,1],~\sum_{i=1}^{n}\lambda_i=1\right\}\geq\min_{i=1,\ldots,n}m(A_i)>0.
        \end{equation*}

        \item Let $\e>0$. Set
        \begin{equation*}
            c:=\frac{\min_{i=1,\ldots,n}m(A_i)}{\sum_{i=1}^{n}\Vert A_i\Vert}.
        \end{equation*}
        Let $\lambda_i,\mu_i\in[0,1]$ with $\sum_{i=1}^{n}\lambda_i=\sum_{i=1}^{n}\mu_i=1$ and $|\lambda_i-\mu_i|\leq\e c$. Set $C:=\sum_{i=1}^{n}\lambda_iA_i$ and $D:=\sum_{i=1}^{n}\mu_iA_i$. Then, we have $C\leq(1+\e)D$ and $D\leq(1+\e)C$.
    \end{enumerate}
\end{lem}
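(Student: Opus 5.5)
Part (1) is a direct quadratic form computation. For a unit vector $h\in H$ and weights $\lambda_i\in[0,1]$ with $\sum_i\lambda_i=1$, we have
\begin{equation*}
\Big(\sum_{i=1}^n\lambda_iA_ih,h\Big)=\sum_{i=1}^n\lambda_i(A_ih,h)\geq\sum_{i=1}^n\lambda_i\, m(A_i)\geq\min_{i}m(A_i).
\end{equation*}
Here we use that for a bounded self-adjoint operator $m(A)=\inf_{\|h\|=1}(Ah,h)$. Since $\sum_i\lambda_iA_i$ is self-adjoint, taking the infimum over unit $h$ gives the claimed lower bound, uniformly in the weights. Strict positivity holds because each $A_i$ is positive definite. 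We interpret this for bounded operators as $m(A_i)>0$, i.e. bounded below, which is automatic in the matrix case we actually need; the minimum is over finitely many positive numbers.

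For part (2) we write $D-C=\sum_i(\mu_i-\lambda_i)A_i$ and estimate it in operator norm:
\begin{equation*}
\|D-C\|\leq\sum_{i=1}^n|\mu_i-\lambda_i|\,\|A_i\|\leq\e c\sum_{i=1}^n\|A_i\|=\e\min_i m(A_i).
\end{equation*}
Since $D-C$ is self-adjoint, this gives $D-C\leq\e\min_i m(A_i)\,\mathrm{Id}$. By part (1) applied to the $\lambda$'s, $\min_i m(A_i)\,\mathrm{Id}\leq C$. Hence $D\leq C+\e C=(1+\e)C$. Exchanging the roles of $\lambda$ and $\mu$ (the hypotheses are symmetric) and applying part (1) to $D$ gives $C\leq(1+\e)D$.

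No step is a real obstacle. The only point needing care is the passage from a norm bound to an operator inequality, namely $X\leq\|X\|\,\mathrm{Id}$ for self-adjoint $X$, combined with the uniform lower bound of part (1). This is exactly why the constant $c$ normalizes $\min_i m(A_i)$ by $\sum_i\|A_i\|$.
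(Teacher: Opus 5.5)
Your proof is correct and follows essentially the paper's argument. For part (1), the paper shows that $\sum_i\lambda_i(A_i-\lambda I)$ is positive definite, hence invertible, for $\lambda<\min_i m(A_i)$, which is the same convexity-of-the-quadratic-form observation you use. For part (2), both bound $\bigl\|\sum_i(\mu_i-\lambda_i)A_i\bigr\|\le\e\min_i m(A_i)$ and absorb this with the lower bound from part (1); your caveat that ``positive definite'' must mean $m(A_i)>0$ is an assumption the paper also makes implicitly.
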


\begin{rem}
    It is easy to see that if $K$ is any set of bounded, self-adjoint, linear operators on $H$ that is compact in the norm operator topology, then $\inf\{m(A):~A\in K\}$ is actually attained. In part 1 of Lemma~\ref{lem:near_convex_comparable} we chose to give a concrete estimate for this infimum.
\end{rem}

\begin{proof}[Proof (of Lemma~\ref{lem:near_convex_comparable})]

    \begin{enumerate}
        \item Notice that the operator $\sum_{i=1}^{n}\lambda_iA_i$ is always positive definite. Thus, it suffices to show that if $\lambda<m=:\min_{i=1,\ldots,n}m(A_i)$, then the operator $P(\lambda):=\sum_{i=1}^{n}A_i-\lambda I$ is invertible. We can write
        \begin{equation*}
            P(\lambda)=\sum_{i=1}^{n}\lambda_i(A_i-\lambda I).
        \end{equation*}
        If $\lambda<m$, then each of the operators $A_i-\lambda I$ is positive definite. Hence, any convex combination of them is also positive definite.

        \item We only show that $C\leq(1+\e)D$, the estimate $D
        \leq(1+\e)C$ being symmetric. Clearly, it suffices to prove that $(1+\e)D-C$ is positive semidefinite. Set $c_i:=\frac{\mu_i-\lambda_i}{\e}$. We compute
        \begin{equation*}
            (1+\e)D-C=\sum_{i=1}^{n}[(1+\e)\mu_i-\lambda_i]A_i=\e\sum_{i=1}^{n}(\mu_{i}+c_i)A_i.
        \end{equation*}
        Noticing that $\sum_{i=1}^{n}\mu_{i}A_i$ is positive definite and
        \begin{equation*}
            \left\Vert\sum_{i=1}^{n}c_iA_i\right\Vert\leq\sum_{i=1}^{n}|c_i|\cdot\Vert A_i\Vert\leq \min_{i=1,\ldots,n}m(A_i)\leq m\left(\sum_{i=1}^{n}\mu_iA_i\right),
        \end{equation*}
        we obtain the required result.
    \end{enumerate}
\end{proof}

Let now $\delta>0$ be arbitrary. We explain that by choosing large enough $d$ we have $S^{\text{dy}}_{\wt{W}}<1+\delta$ as well as $S^{\text{dy}}_{\wt{W}^{-1}}<1+\delta$. Precisely, let us number the dyadic grandchildren of any $K\in\cD$ as $K_1,K_2,K_3,K_4$, in an arbitrary fashion. We pick
\begin{equation*}
    d\geq\frac{1}{\delta}\max\left(\max_{K\in\cD}\frac{\sum_{i=1}^{4}\Vert\langle W\rangle_{K_i}\Vert}{\min_{i=1,2,3,4}m(\langle W\rangle_{K_i})},\max_{K\in\cD}\frac{\sum_{i=1}^{4}\Vert\langle W^{-1}\rangle_{K_i}\Vert}{\min_{i=1,2,3,4}m(\langle W^{-1}\rangle_{K_i})}\right).
\end{equation*}
We note that this choice makes sense, since the dyadic martingale of $F$ terminates after finitely many steps. Let now $I\in\cD$ be arbitrary. Then, by construction it is clear that there is $K\in\cD$ such that
\begin{equation}
    \label{new_is_convex_combination_of_old}
    \langle\wt{F}\rangle_{I_{+}}=\sum_{i=1}^{4}\lambda_i\langle F\rangle_{K_{i}}\quad\text{and}\quad\langle\wt{F}\rangle_{I_{-}}=\sum_{i=1}^{4}\mu_i\langle F\rangle_{K_{i}}.
\end{equation}
Observe that relations \eqref{new_is_convex_combination_of_old} hold also for the components $W$ and $W^{-1}$ of $F$, respectively $\wt{W}$ and $\wt{W}^{-1}$ of $\wt{F}$. Thus, by Lemma~\ref{lem:near_convex_comparable} we immediately deduce $\langle \wt{W}\rangle_{I_{\pm}}\leq(1+\delta)\langle \wt{W}\rangle_{I_{\mp}}$ and $\langle \wt{W}^{-1}\rangle_{I_{\pm}}\leq(1+\delta)\langle \wt{W}^{-1}\rangle_{I_{\mp}}$.

Finally, let us note that $[\wt{W}]_{\mathbf{A}_2}^{\text{dy}}\leq 16Q$. Let $I\in\cD$ be arbitrary. Then, we can write $\langle \wt{F}\rangle_{I}$ as a convex combination $\langle \wt{F}\rangle_{I}=\sum_{i=1}^{4}\lambda_i\langle F\rangle_{K_i}$ for some $K\in\cD$. Thus, using \cite[Lemma 3.1]{DoPeTrVo} we can estimate
\begin{align*}
    \langle\wt{W}\rangle_{I}&\leq\sum_{i=1}^{4}\langle W\rangle_{K_i}=4\langle W\rangle_{K}\leq 4[W]_{\mathbf{A}_2}^{\text{dy}}\langle W^{-1}\rangle_{K}^{-1}=16Q\left(\sum_{i=1}^{4}\langle W^{-1}\rangle_{K_i}\right)^{-1}\\
    &\leq 16Q\left(\sum_{i=1}^{4}\lambda_i\langle W^{-1}\rangle_{K_i}\right)^{-1}=16Q\langle\wt{W}^{-1}\rangle_{I}^{-1}.
\end{align*}
One more application of \cite[Lemma 3.1]{DoPeTrVo} yields $[\wt{W}]_{\mathbf{A}_2}^{\text{dy}}\leq 16Q$.

\section{Remodeling and finalization of the construction}

As last step of our construction, we apply the remodeling transform described in \cite[Section 7]{DoPeTrVo} on the $\mathscr{X}$-valued function $\wt{F} = (\wt{W},\wt{V},\wt{\mathbf{f}},\wt{\mathbf{g}})$ constructed in Section~\ref{sec:small_step_examples} above. Since we do not need to perform any changes as compared to \cite[Section 7]{DoPeTrVo}, we omit the details and instead refer the reader to \cite[Section 7]{DoPeTrVo}. We denote the new function arising through this construction again by $\wt{F} = (\wt{W},\wt{V},\wt{\mathbf{f}},\wt{\mathbf{g}})$, abusing notation.

As explained in \cite[Section 8]{DoPeTrVo}, by choosing the frequency parameters of the remodeling transform appropriately, we can ensure that
\begin{equation*}
    |(\mathcal{H}\wt{\mathbf{f}},\wt{\mathbf{g}})_{L^2(I^0)}|\gtrsim Q^{3/2}\|\wt{\mathbf{f}}\|_{L^2(\wt{W})}\|\wt{\mathbf{g}}\|_{L^2(\wt{W}^{-1})}.
\end{equation*}
In the rest of this section we check that the remodeled weight $\wt{W}$ satisfies the necessary conditions regarding Muckenhoupt characteristics and smoothness.

\subsection{Muckenhoupt characteristic and smoothness}
    
\subsubsection{Passage from the dyadic matrix \texorpdfstring{$A_2$}{A2} condition to the classical one} The weight $\wt{W}$ of Section~\ref{sec:small_step_examples} satisfies $[\wt{W}]_{\mathbf{A}_2^{\mathrm{dy}}}\sim Q$. As shown in \cite[Subsection 9.1]{DoPeTrVo}, the weight arising after in addition the remodeling transform has been applied to $\wt{W}$, which through abuse of notation is still denoted by $\wt{W}$, satisfies $[\wt{W}]_{\mathbf{A}_2}\sim Q$.
	
\subsubsection{Passage from dyadic smoothness to strong dyadic smoothness} Let $\e>0$ be arbitrary and let $\delta>0$ with $(1+\delta)^3\leq1+\e$. As explained in Section~\ref{sec:small_step_examples}, we can ensure that the weight $\wt{W}$ on $[0,1)$ constructed there satisfies $S_{\wt{W}}^{\text{d}}<1+\delta$. Then, similarly to the scalar case \cite[Lemma 6.1]{KaTr}, we have that after applying remodeling, the remodeled weight, still denoted by $\wt{W}$, satisfies $S_{\wt{W}}^{\text{sd}}<1+\e$.

\bibliography{bibliography}

\begin{thebibliography}{LSSUT14}

\bibitem[ACUM15]{AnCrMo}
Theresa~C. Anderson, David Cruz-Uribe, and Kabe Moen.
\newblock {Logarithmic bump conditions for {Calderón}-{Zygmund} operators on spaces of homogeneous type}.
\newblock {\em Publicacions Matemàtiques}, 59(1):17 -- 43, 2015.

\bibitem[Bou83]{Bo}
Jean Bourgain.
\newblock {Some remarks on {Banach} spaces in which martingale difference sequences are unconditional}.
\newblock {\em Arkiv för Matematik}, 21(1–2):163--168, December 1983.

\bibitem[Buc93]{Bu}
Stephen~M. Buckley.
\newblock {Estimates for Operator Norms on Weighted Spaces and Reverse {Jensen} Inequalities}.
\newblock {\em Transactions of the American Mathematical Society}, 340(1):253, November 1993.

\bibitem[CUP00]{CrPe}
David Cruz-Uribe and Carlos Pérez.
\newblock {Two-weight, Weak-type Norm Inequalities for Fractional Integrals, {Calderón}-{Zygmund} Operators and Commutators}.
\newblock {\em Indiana University Mathematics Journal}, 49(2):697--721, 2000.

\bibitem[CURV14]{CrReVo}
David Cruz-Uribe, Alexander Reznikov, and Alexander Volberg.
\newblock {Logarithmic bump conditions and the two-weight boundedness of {Calderón}–{Zygmund} operators}.
\newblock {\em Advances in Mathematics}, 255:706--729, April 2014.

\bibitem[DPTV24]{DoPeTrVo}
Komla Domelevo, Stefanie Petermichl, Sergei Treil, and Alexander Volberg.
\newblock {The matrix {$A_2$} conjecture fails, i.e. $3/2>1$}.
\newblock February 2024.

\bibitem[GPTV01]{GiPoTrVo}
Thomas~A. Gillespie, Sandra Pott, Sergei Treil, and Alexander Volberg.
\newblock {Logarithmic growth for matrix martingale transforms}.
\newblock {\em Journal of the London Mathematical Society}, 64(3):624--636, December 2001.

\bibitem[HMW73]{HuMuWh}
Richard Hunt, Benjamin Muckenhoupt, and Richard Wheeden.
\newblock {Weighted norm inequalities for the conjugate function and {Hilbert} transform}.
\newblock {\em Transactions of the American Mathematical Society}, 176:227--227, 1973.

\bibitem[HN94]{HaNi}
Victor~P. Hanin and Nikolai~K. Nikolski, editors.
\newblock {\em {Linear and Complex Analysis Problem Book 3}}.
\newblock Springer Berlin Heidelberg, 1994.

\bibitem[Hyt18]{Hy}
Tuomas~P. Hytönen.
\newblock {The two-weight inequality for the {Hilbert} transform with general measures}.
\newblock {\em Proceedings of the London Mathematical Society}, 117(3):483--526, April 2018.

\bibitem[Kak]{Ka_PhD}
Spyridon Kakaroumpas.
\newblock {\emph{Sharp Weighted Estimates in Harmonic Analysis}}.
\newblock PhD Thesis, Brown University, 2020.

\bibitem[Kak22]{Ka}
Spyridon Kakaroumpas.
\newblock {Two-weight estimates for sparse square functions and the separated bump conjecture}.
\newblock {\em Transactions of the American Mathematical Society}, February 2022.

\bibitem[Kle20]{Klenke2020}
Achim Klenke.
\newblock {\em {Probability Theory: A Comprehensive Course}}.
\newblock Springer International Publishing, 2020.

\bibitem[KS24]{KS2024}
Spyridon Kakaroumpas and Odí Soler.
\newblock {Preimages under linear combinations of iterates of finite {Blaschke} products}.
\newblock {\em Analysis and Mathematical Physics}, 14(3), June 2024.

\bibitem[KT21]{KaTr}
Spyridon Kakaroumpas and Sergei Treil.
\newblock {{``Small step''} remodeling and counterexamples for weighted estimates with arbitrarily “smooth” weights}.
\newblock {\em Advances in Mathematics}, 376:107450, 2021.

\bibitem[Lac14]{LaII}
Michael~T. Lacey.
\newblock {Two-weight inequality for the {Hilbert} transform: A real variable characterization, II}.
\newblock {\em Duke Mathematical Journal}, 163(15), December 2014.

\bibitem[Lac16]{LaIV}
Michael~T. Lacey.
\newblock {On the Separated Bumps Conjecture for {Calderón}-{Zygmund} Operators}.
\newblock {\em Hokkaido Mathematical Journal}, 45(2), June 2016.

\bibitem[Lac17]{LaIII}
Michael~T. Lacey.
\newblock {\em {The Two Weight Inequality for the {Hilbert} Transform: A Primer}}, pages 11--84.
\newblock Springer International Publishing, 2017.

\bibitem[Ler13]{Le}
Andrei~K. Lerner.
\newblock {On an estimate of {Calderón}-{Zygmund} operators by dyadic positive operators}.
\newblock {\em Journal d’Analyse Mathématique}, 121(1):141--161, October 2013.

\bibitem[LSSUT14]{LaI}
Michael~T. Lacey, Eric~T. Sawyer, Chun-Yen Shen, and Ignacio Uriarte-Tuero.
\newblock {Two-weight inequality for the {Hilbert} transform: A real variable characterization, I}.
\newblock {\em Duke Mathematical Journal}, 163(15), December 2014.

\bibitem[Mak89]{Makarov1990}
Nikolai~G. Makarov.
\newblock {Probability methods in the theory of conformal mappings}.
\newblock {\em Algebra i Analiz}, 1(1):3--59, 1989.

\bibitem[Naz]{Na}
Fedor Nazarov.
\newblock {\emph{A counterexample to {Sarason}'s conjecture}}.
\newblock Unpublished manuscript, available at \url{https://users.math.msu.edu/users/fedja/prepr.html}.

\bibitem[Neu83]{Ne}
Christoph~J. Neugebauer.
\newblock {Inserting {$A_p$}-Weights}.
\newblock {\em Proceedings of the American Mathematical Society}, 87(4):644, April 1983.

\bibitem[NPTV17]{NaPeTrVo}
Fedor Nazarov, Stefanie Petermichl, Sergei Treil, and Alexander Volberg.
\newblock {Convex body domination and weighted estimates with matrix weights}.
\newblock {\em Advances in Mathematics}, 318:279--306, October 2017.

\bibitem[NRTV13]{NaReTrVo}
Fedor Nazarov, Alexander Reznikov, Sergei Treil, and Alexander Volberg.
\newblock {A {Bellman} function proof of the {$L^2$} bump conjecture}.
\newblock {\em Journal d’Analyse Mathématique}, 121(1):255--277, October 2013.

\bibitem[Pet07]{Pe}
Stefanie Petermichl.
\newblock {The sharp bound for the {Hilbert} transform on weighted {Lebesgue} spaces in terms of the classical {$A_p$} characteristic}.
\newblock {\em American Journal of Mathematics}, 129(5):1355--1375, October 2007.

\bibitem[PV02]{PeVo}
Stefanie Petermichl and Alexander Volberg.
\newblock {Heating of the {Ahlfors}-{Beurling} operator: weakly quasiregular maps on the plane are quasiregular}.
\newblock {\em Duke Mathematical Journal}, 112(2), April 2002.

\bibitem[PW02]{PeWit}
Stefanie Petermichl and Janine Wittwer.
\newblock {A sharp estimate for the weighted Hilbert transform via Bellman functions}.
\newblock {\em Michigan Mathematical Journal}, 50(1):71 -- 88, 2002.

\bibitem[RS17]{RaSp}
Robert Rahm and Scott Spencer.
\newblock {Entropy Bumps and another sufficient condition for the two-weight boundedness of sparse operators}.
\newblock {\em Israel Journal of Mathematics}, 223(1):197--204, November 2017.

\bibitem[TV97]{TrVo}
Sergei Treil and Alexander Volberg.
\newblock {Wavelets and the Angle between Past and Future}.
\newblock {\em Journal of Functional Analysis}, 143(2):269--308, February 1997.

\bibitem[TV16]{TrVo1}
Sergei Treil and Alexander Volberg.
\newblock {Entropy conditions in two weight inequalities for singular integral operators}.
\newblock {\em Advances in Mathematics}, 301:499--548, October 2016.

\end{thebibliography}
\bibliographystyle{alpha}

\end{document}